\newtheorem{thm}{Theorem}[section]
\newtheorem{prop}[thm]{Proposition}
\newtheorem{lem}[thm]{Lemma}
\theoremstyle{definition}
\newtheorem{definition}[thm]{Definition}
\title[On supersingular representations of $GL_2(D)$ over a $p$-adic field]{On supersingular representations of $GL_2(D)$ with a division algebra $D$ over a $p$-adic field}
\author[Wijerathne Mudiyanselage Menake Wijerathne]{Wijerathne Mudiyanselage Menake Wijerathne}
\keywords{Supersingular representation, Mod-$p$ local Langlands, Modular representation}
\subjclass[]{Primary \textbf{11F70}; Secondary 11F85}
\begin{document}
\maketitle  
\begin{abstract}
We investigate the mod-$p$ supersingular  representations of $GL_2(D)$, where $D$ is a division algebra over a $p$-adic field with characteristic 0, by computing a basis for the vector space of the pro-$p$ Iwahori subgroup invariants of a certain quotient of a compact induction. This work generalizes the results of Hendel and Schein.
\end{abstract}
\setcounter{tocdepth}{1}
\section{Introduction} 
\subsection{Motivation}
Let $F$ be a finite extension of $\mathbb{Q}_p$ with ring of integers $\mathcal{O}_F$ and residue field $\mathbb{F}_q$ where $q=p^f$ and $f$ be the inertia degree of $F/\mathbb{Q}_p$. 
In 1994, Barthel and Livne initiated the study of smooth irreducible mod-$p$ representations of $GL_2(F)$ in \cite{MR1290194}. This study played a major role in establishing the mod-$p$ Langland correspondence in \cite{MR2931521} and \cite{MR2827792}. Let $F$ be a finite extension of $\mathbb{Q}_p$ with ring of integers $\mathcal{O}_F$ and residue field $\mathbb{F}_q$ where $q=p^f$ and $f$ be the inertia degree of $F/\mathbb{Q}_p$. They showed that any irreducible representation of $GL_2(F)$ originates from the quotient of \begin{equation*}
    \text{ind}_{KZ}^G\sigma/(T-\lambda)\text{ind}_{KZ}^G\sigma,\end{equation*} where $\lambda \in \overline{\mathbb{F}}_p$, $\text{ind}_{KZ}^G\sigma$ is the compact induction of an irreducible smooth representation $\sigma$ of $KZ$ (with $K=GL_2(\mathcal{O}_F)$ and $Z$ the center of $GL_2(F)$), and $T$ is the standard spherical Hecke operator (defined in Proposition 8 in \cite{MR1290194}). Their classification included a new kind of representations, called supersingular representations, which were attained when $\lambda = 0$. Breuil proved that these are irreducible for the case $F=\mathbb{Q}_p$ by explicitly computing the $I(1)$-invariant subspace of $\text{ind}_{KZ}^G\sigma/(T)$, where $I(1)$ is the pro-$p$-Iwahori subgroup of $GL_2(\mathbb{Z}_p)$ (see Theorem 3.2.4 in \cite{MR2018825}). Schein (see \cite{MR2833554} and \cite{MR3195399}) and Hendel (see \cite{MR3873949}) generalized these methods and obtained an infinite dimensional basis of $I(1)$-invariant subspace for the quotient $\text{ind}_{KZ}^G\sigma/(T)$ in the case of $F\neq \mathbb{Q}_p$.
\subsection{Method and Results}
In this paper, we primarily employ the methods given in Hendel's paper (see \cite{MR3873949}) to obtain an infinite dimensional basis of the $I(1)$-invariant subspace of the quotient $\text{ind}_{KZZ^{\prime}}^G\sigma/(T)$ of $GL_2(D)$. Here, $D$ is a finite dimensional central $F$-division algebra of dimension $w^2$ with ring of integers $\mathcal{O}_D$ and residue field $\mathbb{F}_{q^w}$. $K=GL_2(\mathcal{O}_D)$, Z is the center, and $Z^{\prime}$ is the group generated by $\begin{pmatrix} 
\varpi_D & 0  \\ 
0 & \varpi_D
 \end{pmatrix}$. $T$ is a function in the Hecke algebra that is supported on the double coset \begin{equation*}
     KZZ^{\prime}\begin{pmatrix} 1 & 0  \\ 0& \varpi_D\end{pmatrix}KZZ^{\prime} \end{equation*} (see Definition \ref{Hecke} for a full description). 

Let us denote $[g, v]$ as an element of $\textnormal{ind}_{KZZ^{\prime}}^G\sigma$, where $g\in G$ and $v\in V_{\sigma}$, as first introduced by Breuil in \cite{MR2018825}. For a full description of this notation, see section \ref{weight}. Additionally, let $I_n=\{\sum_{i=0}^{n-1}\varpi_D^i[\mu_i]\hspace{3pt}:\hspace{3pt}\mu_i\in \mathbb{F}_{q^w}\}$ be a set of explicit coset representatives of $\mathcal{O}_D/(\varpi_D^n)$ with $I_0=\{0\}$. 

As stated in Lemma \ref{1-1}, there is a one-to-one correspondence between the irreducible representations of the maximal compact subgroup $K = GL_2(\mathcal{O}_D)$ and the irreducible representations, known as weights, of $G = GL_2(k_D)$.

Now, we define the following elements in $\text{ind}_{KZZ^{\prime}}^G\sigma$ for $n\geq1$: 
\begin{align*}
    &s_n^k=\sum\limits_{\mu\in I_n}[g_{n, \mu}^0, \mu^{k}_{n-1}\bigotimes\limits_{j=0}^{wf-1}x_j^{r_j}] \hspace{5pt}\text{where}\hspace{5pt} 0\leq k \leq q^w-1,\\
    &t_n^s=\sum\limits_{\mu\in I_n}[g_{n, \mu}^0,  \bigotimes\limits_{s\neq j=0}^{wf-1}x_j^{r_j}\otimes x_s^{r_s-1}y_s]\hspace{5pt}\text{where}\hspace{5pt} 0\leq s \leq wf-1, 
\end{align*}
where $\mu =\sum\limits_{i=0}^{n-1}\varpi_D^i[\mu_i]$.
Propositions \ref{prop9} and \ref{invariant} provide comprehensive information regarding the notations mentioned previously. Consequently, the main theorem of this paper is:
\begin{thm}\label{mainmain}$($Theorem \ref{main}$)$
Let $D$ be a finite dimensional central $F$-division algebra of dimension $w^2$ over $F$, and let $e$ and $f$ be the ramification degree and inertia degree of $F$ over $\mathbb{Q}_p$, respectively. Assume $2<r_j< p-3$ for $0\leq j \leq wf-1$, and let us set,
\begin{align*}
    &S_m^l=\{s_n^{p^l(r_l+1)}\}_{n\geq m}\cup \{\beta s_n^{p^l(r_l+1)}\}_{n\geq m},\\
    &T_m^l=\{t_n^l\}_{n\geq m}\cup \{\beta t_n^l\}_{n\geq m},\\
   &S_m=\cup_{l=0}^{wf-1}S_m^l,\\
    &T_m=\cup_{l=0}^{wf-1}T_m^l.\\
\end{align*}
Then, an $I$-eigenbasis for the space $(\textnormal{ind}_{KZZ^{\prime}}^G\sigma/(T))^{I(1)}$ of $I(1)$-invariant as an $\overline{\mathbb{F}}_p$-vector space is given by
\begin{align*}
   S_1\cup \{[\textnormal{Id}, \bigotimes\limits_{j=0}^{wf-1} x_j^{r_j}], [\alpha,\bigotimes\limits_{j=0}^{wf-1} y_j^{r_j} ]\} &\hspace{10pt} \text{when} \hspace{5pt}e=1,wf>1,\\
   S_1\cup \{[\textnormal{Id},\bigotimes\limits_{j=0}^{wf-1} x_j^{r_j}], [\alpha,\bigotimes\limits_{j=0}^{wf-1} y_j^{r_j} ]\}\cup T_1 &\hspace{10pt} \text{when}\hspace{5pt} e>1,wf>1.\\
\end{align*}

\end{thm}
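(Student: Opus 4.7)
The plan is to follow Hendel's tree-filtration strategy, adapted to $GL_2(D)$. First I would parameterize $G/KZZ'$ by the vertices of the Bruhat--Tits tree of $GL_2(D)$, using the representatives $g_{n,\mu}^0$ (for $\mu\in I_n$, $n\geq 0$) together with their $\beta$-twists to cover both halves of the tree. This yields a natural filtration $R_0\subset R_1\subset\cdots$ of $\textnormal{ind}_{KZZ'}^G\sigma$ by subspaces supported on combinatorial balls of radius $n$ around the origin, whose graded pieces $R_n/R_{n-1}$ are explicit direct sums of copies of the weight $\sigma$ indexed by $\mu\in I_n$.

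Next I would analyze the Hecke operator $T$ with respect to this filtration. For each $[g_{n,\mu}^0, v]$, the image $T([g_{n,\mu}^0,v])$ decomposes as a sum of level-$(n{+}1)$ contributions coming from the $q^w$ forward neighbors of $g_{n,\mu}^0$, plus a level-$(n{-}1)$ contribution from the unique backward neighbor. Each forward contribution involves a substitution $x_j\mapsto x_j - [\mu_n]y_j$ (with appropriate Frobenius twists), producing an explicit polynomial expansion. In the quotient $\textnormal{ind}_{KZZ'}^G\sigma/(T)$ these relations telescope, so that any $I(1)$-invariant at level $n+1$ can be pushed back to level $n$ modulo lower-level remainders.

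I would then compute the $I(1)$-invariants level by level. The pro-$p$ Iwahori acts on $[g_{n,\mu}^0, v]$ by a substitution of the variables $(x_j, y_j)$ that is essentially the lift of the unipotent residual action translated by $\mu$. Imposing invariance produces a system of polynomial identities in the $\mu_i$ whose solutions are spanned exactly by the symmetric sums $s_n^k$ (for $0\leq k\leq q^w-1$) together with the $t_n^s$ (for $0\leq s\leq wf-1$), pulled back through the various Frobenius tensor factors; that each such vector is in fact an $I$-eigenvector is then automatic from the character of $I/I(1)$ acting on the level-$n$ piece. The assumption $2<r_j<p-3$ rules out degeneracies in these polynomial identities, ensuring the candidate set has the expected size and is linearly independent modulo $R_{n-1}$. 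This combinatorial step is where I expect the main obstacle to lie: controlling the interplay between the twisted Frobenius components $\bigotimes_j x_j^{r_j}$ and the Hecke relations cleanly enough to rule out accidental invariants outside the prescribed list.

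Finally I would distinguish the cases $e=1$ and $e>1$. In the unramified case the uniformizer $\varpi_D$ acts more rigidly on the residue field, which forces the $t_n^l$ combinations to become expressible in terms of $s_n^{p^l(r_l+1)}$ modulo $(T)$; the basis reduces to $S_1$ together with the two level-$0$ boundary elements $[\textnormal{Id},\bigotimes x_j^{r_j}]$ and $[\alpha,\bigotimes y_j^{r_j}]$. When $e>1$, the $t_n^l$ vectors remain genuinely independent and must be reinstated via $T_1$. A level-by-level induction combined with a dimension count on each graded piece then completes both the spanning property and the linear independence, yielding the stated $I$-eigenbasis.
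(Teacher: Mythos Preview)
Your overall strategy matches the paper's: filter by distance on the Bruhat--Tits tree, write an arbitrary $I(1)$-invariant $\widetilde f$ as $\widetilde f_n^0+\widetilde f_n^1+f'$ with $f'$ supported on lower levels, and constrain the top piece by applying specific elements of $I(1)$. The paper executes this via three lemmas: first restricting the shape of $\widetilde f_n^0$ to a combination $c(\mu)\bigotimes_j x_j^{r_j}+\sum_k d_k(\mu)\bigotimes_{k\neq j}x_j^{r_j}\otimes x_k^{r_k-1}y_k$, then pinning down $d_k$, then pinning down $c$; Proposition~\ref{invariant} supplies the forward direction (the listed elements are invariants), and Lemma~\ref{I} gives the $I$-eigenvector property.

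There is one genuine gap in your proposal: the mechanism you give for the $e=1$ versus $e>1$ dichotomy is not what actually drives it. It is not that ``$\varpi_D$ acts more rigidly on the residue field'' nor that the $t_n^l$ become expressible in terms of the $s_n^{p^l(r_l+1)}$ modulo $(T)$. The real point is the Teichm\"uller carry: by Lemma~\ref{witt}, $[\mu]+[b]\equiv [\mu+b]+\varpi_D^{e}[P_0(\mu,b)]\pmod{\varpi_D^{e+1}}$, so when you conjugate $g_{n,\mu}^0$ by an upper unipotent with entry $[b]\varpi_D^{n-1}$, the error term $P_0(\mu_{n-1},b)$ hits the weight only when $e=1$. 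In that case the resulting polynomial in $\mu_{n-1}$ has monomials of degree $(p-1)p^{k-1}$, and a Lucas-theorem contradiction (Lemma~\ref{lem9}) forces every $d_k$ to vanish; hence no $t_n^l$ component can occur in an invariant. When $e>1$ the carry term is killed by $\varpi_D^{e-1}$, the obstruction disappears, and one only gets that each $d_k$ is constant, which is exactly what allows the $t_n^l$ to survive. Without isolating this Witt-vector mechanism you will not be able to close the argument.

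A smaller imprecision: you say the level-$n$ invariants are spanned by $s_n^k$ for all $0\le k\le q^w-1$. In the quotient most of these are already in $\textnormal{Im}(T)$ (Proposition~\ref{invariant}(1),(2)); the work in Lemma~\ref{lem 13} is to show, via acting by $\begin{pmatrix}1&\varpi_D^{n-m}\\0&1\end{pmatrix}$ and Lucas's theorem, that only the exponents $p^l(r_l+1)$ can appear in $c(\mu)$, and that their coefficients are independent of $[\mu]_{n-1}$. Your sketch does not single out this step, and it is where the hypothesis $2<r_j<p-3$ is actually used.
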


\subsection{Organization.}
This paper is organized as follows. In Section 2, we discuss some preliminary results on mod-$p$ representations of $GL_2(D)$ and prove in Lemma \ref{1-1} a bijection between every irreducible smooth representations of the maximal compact subgroup $K=GL_2(\mathcal{O}_D)$ and the irreducible representations of $GL_2(K_D)$. In Section 3, we examine the properties of $I(1)$-invariants and Section 4 is devoted to prove our main theorem by obtaining a basis for the space $(\textnormal{ind}_{KZZ^{\prime}}^G\sigma/(T))^{I(1)}$. Several of these results are obtained by extending Hendel's work in \cite{MR3873949} to the case of $GL_2(D)$.
\subsection*{Acknowledgement} I would like to extend my heartfelt gratitude to my PhD advisor, Kwangho Choiy, for his unwavering support, guidance and encouragement throughout this research project. He has provided invaluable advice and feedback throughout the entire process and I could not have completed this project without his constant help and guidance. 

\setcounter{tocdepth}{1}

\section{Notations and preliminaries} \label{}
Let $D$ be a finite dimensional central $F$-division algebra of dimension $w^2$ over $F$. Let $\mathcal{O}_D$ denote its ring of integers with uniformizer $\varpi_D$. Let $e$ and $f$ be the ramification degree and inertia degree of $F/\mathbb{Q}_p$, respectively. The residue field $k_D = \mathcal{O}_D/(\varpi_D)$ is isomorphic to $\mathbb{F}_{q^w}$, where $q=p^f$. Consider the multiplicative group $G=GL_2(D)$ consisting of all $2\times 2$ invertible matrices whose entries are from $D$. Let $K$ denote the maximal compact open subgroup of $G$.  

\begin{lem}\label{1-1}
There is a bijection between the irreducible smooth representations of $K=GL_2(\mathcal{O}_D)$ and the irreducible representations of $GL_2(k_D)$.
\end{lem}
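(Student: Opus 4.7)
The plan is to exhibit the bijection via inflation along the reduction-mod-$\varpi_D$ map $K = GL_2(\mathcal{O}_D) \twoheadrightarrow GL_2(k_D)$. The inflation map sends each irreducible representation of $GL_2(k_D)$ to an irreducible smooth representation of $K$, and the goal is to show it is surjective onto the set of irreducible smooth representations of $K$ (injectivity is immediate, since $GL_2(k_D)$ is a quotient of $K$). Equivalently, I must show that every irreducible smooth $\overline{\mathbb{F}}_p$-representation of $K$ factors through $K/K(1)$, where $K(1) := 1 + \varpi_D M_2(\mathcal{O}_D)$ is the first principal congruence subgroup.

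First I would verify the algebraic setup: the reduction map $\mathcal{O}_D \to k_D$ induces a surjection $K \twoheadrightarrow GL_2(k_D)$ whose kernel is precisely $K(1)$, so $K/K(1) \cong GL_2(k_D)$. Next I would prove that $K(1)$ is a pro-$p$ group. This follows from the filtration $K(n) = 1 + \varpi_D^n M_2(\mathcal{O}_D)$, where each successive quotient $K(n)/K(n+1)$ is isomorphic (as an abelian group, via $1 + \varpi_D^n A \mapsto A \bmod \varpi_D$) to the additive group $M_2(k_D)$, which is a finite $p$-group since $\mathrm{char}\, k_D = p$. Thus $K(1) = \varprojlim_n K(1)/K(n)$ is an inverse limit of finite $p$-groups, hence pro-$p$.

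The central step is the classical fact that any smooth nonzero representation $(\pi, V)$ of a pro-$p$ group $H$ on an $\overline{\mathbb{F}}_p$-vector space has a nonzero $H$-fixed vector. The standard argument: choose any $0 \neq v \in V$; by smoothness, its stabilizer is open, so $v$ generates a finite-dimensional subrepresentation on which $H$ acts through a finite $p$-group quotient; then an orbit-counting argument (the orbit of $v$ has $p$-power length, and $|V^H| \equiv |V| \pmod p$ in the finite subrepresentation) produces a fixed vector. Applying this with $H = K(1)$ to an irreducible smooth representation $(\pi, V)$ of $K$, the subspace $V^{K(1)}$ is nonzero. Since $K(1)$ is normal in $K$, the subspace $V^{K(1)}$ is $K$-stable, and irreducibility forces $V^{K(1)} = V$. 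Hence $\pi$ factors through $K/K(1) \cong GL_2(k_D)$, and its pullback to $K$ remains irreducible, completing the bijection.

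The main obstacle, if any, is simply keeping the noncommutative nature of $D$ in check when computing $K(n)/K(n+1)$ — one must check that the binomial-style identity $(1 + \varpi_D^n A)(1 + \varpi_D^n B) \equiv 1 + \varpi_D^n(A+B) \pmod{K(n+1)}$ still holds and that $\varpi_D^n M_2(\mathcal{O}_D) = M_2(\mathcal{O}_D) \varpi_D^n$ as a two-sided ideal (true because $\varpi_D$ generates a two-sided maximal ideal in $\mathcal{O}_D$). Once this is in place, the pro-$p$ argument goes through verbatim as in the commutative case.
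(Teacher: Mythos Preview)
Your proposal is correct and follows essentially the same approach as the paper's proof: both show that every irreducible smooth $\overline{\mathbb{F}}_p$-representation of $K$ factors through $K/K(1)\cong GL_2(k_D)$ by using that $K(1)$ is a normal pro-$p$ subgroup, so $V^{K(1)}\neq 0$ is $K$-stable and hence equals $V$, with inflation giving the inverse map. The only difference is that you supply more detail (the filtration argument that $K(1)$ is pro-$p$ and the orbit-counting reason for the fixed vector), whereas the paper simply asserts these facts.
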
 

\begin{proof}
We provide a proof here for the convenience of the reader, which follows the ideas given in Section 2.3 of \cite{ly}. Since $K(1)=1+\varpi_D M_2(\mathcal{O}_D)$ is an open, normal subgroup of $K$, and $K(1)$ is pro-$p$, it follows that $V^{K(1)}\neq 0$ for any smooth irreducible representation $V$ of $K$. Moreover, since $K(1)$ is normal in $K$, we have that $V^{K(1)}$ is a $K$-stable subspace of $V$. As a result, we can conclude that $V$ is a representation of $K/K(1)\cong GL_2(k_D)$, and $V=V^{K(1)}$. Conversely, for a smooth irreducible representation $\sigma$ of $K/K(1)$, we define a map $\varphi$ such that
\begin{equation*}
\varphi :K \longrightarrow K/K(1).
\end{equation*}
We define an inflation map $\widetilde{\sigma}$ that takes elements of $K/K(1)$ to $K$ such that
\begin{equation*}
\widetilde{\sigma}(k) = \sigma(\varphi(k))
\end{equation*}which yields the desired irreducible representation.
\end{proof}
\subsection{Weights}
From this point forward, we shall refer to the irreducible smooth mod-$p$ representations of $GL_2(k_D)$ as weights. For a given vector $\vec{r}=(r_0,\cdots,r_{wf-1})\in\{0,1,....,p-1\}^{wf}$ where $wf$ is the degree of the finite extension of $k_D$ over $\mathbb{F}_p$, a model for the weight is denoted by $(V_{\sigma})$ and is given by the space of homogeneous polynomials $Sym^{\vec{r}}\overline{\mathbb{F}}_p=\bigotimes_{j=0}^{wf-1}Sym^{r_j}\overline{\mathbb{F}}_p$ with the basis $\{\bigotimes_{j=0}^{wf-1} x_j^{r_j-i_j}y_j^{i_j}\}_{\vec{i}=0}^{\vec{r}}$. The action of $\sigma$ on $GL_2(k_D)$ is then given by,
\begin{equation} \label{weight}
    \sigma\begin{pmatrix} 
a& b  \\ 
c& d
 \end{pmatrix}(\bigotimes_{j=0}^{wf-1} x_j^{r_j-i_j}y_j^{i_j})=\bigotimes\limits_{j=0}^{wf-1}(a^{p^j}x_j+c^{p^j}y_j)^{r_j-i_j}(b^{p^j}x_j+d^{p^j}y_j)^{i_j}.
\end{equation}

By abuse of notation, we let $\sigma$ denote an irreducible smooth representation of $K$ on an $\overline{\mathbb{F}}_p$-vector space corresponding to the vector $V_{\sigma}$. We inflate $\sigma$ to a representation of $KZZ^{\prime}$ by making $Z$ and $Z^{\prime}$ act trivially. We define the compact induction of $\sigma$ from $KZZ^{\prime}$ to $G$, as follows:
\begin{equation*}
    \textnormal{ind}_{KZZ^{\prime}}^G\sigma:=\{f:G\longrightarrow V_{\sigma}|f(hg)=\sigma(hg)=\sigma(h)(f(g)), \hspace{5pt}\forall g\in G, h\in KZZ^{\prime}\}
\end{equation*}
with the action of $g\in G$ on $f$ is given by $(gf)(g^\prime)=f(g^\prime g)$ and $f(kg)=\sigma(k)(f(g))$ when $k\in KZZ^{\prime}$.
For $g\in G, v\in V_\sigma$, we denote $[g, v]$ to be an element of $\textnormal{ind}_{KZZ^{\prime}}^G\sigma$ defined as follows,
\begin{align*}
[g,v](g^\prime)&=\sigma(g^\prime g).v \hspace{5pt}if\hspace{5pt} g^\prime\in KZZ^{\prime}g^{-1},\\
[g,v](g^\prime)&=0\hspace{5pt} if\hspace{5pt} g^\prime\notin KZZ^{\prime}g^{-1}.
\end{align*}
Thus, the action of $g\in G$ on $[g, v]$ is given by $g^\prime([g, v])=[{g^\prime}g, v]$ and moreover $[gk, v]=[g, \sigma(k)v]$ for $k\in K$. Also, any elements $f\in \textnormal{ind}_{KZZ^{\prime}}^G\sigma$ can be written as $f=\sum\limits_{i\in I}[g_i, v_i]$, where $g_i\in G, v_i\in V_\sigma$ and $I$ is a finite set $($see, section 2.3 in \cite{MR2018825}$)$.
\subsection{Hecke Algebra}
\begin{definition}\label{Hecke}
The Hecke algebra of the weight $V_{\sigma}$,
\begin{equation*}
    \mathcal{H}(V_{\sigma}):=\text{End}_G(\text{ind}_{KZZ^{\prime}}^G\sigma).
\end{equation*}
We have that $\mathcal{H}(V_{\sigma})$ is isomorphic to the $\overline{\mathbb{F}}_p$-vector space of functions $\varphi:G\longrightarrow End(V_\sigma)$ such that \begin{equation}\label{phi}
\varphi(k_1gk_2)=k_1\cdot\varphi(g)\cdot k_2
\end{equation}
for $k_1, k_2\in KZZ^{\prime}$ and $g\in G$ $($see, section $2.2$ in \cite{MR1290194}$)$. 

Let $\varphi$ be such a function, 
$ g\in G$, $v\in V_\sigma$ and $T$ is a function in Hecke algebra supported on the double coset $KZZ^{\prime}\begin{pmatrix} 
1 & 0  \\ 
0& \varpi_D
 \end{pmatrix}KZZ^{\prime}$, such that $T\begin{pmatrix} 
1 & 0  \\ 
0& \varpi_D
 \end{pmatrix}\in \text{End}_G(V_{\sigma})$ is a linear projection$($see, Proposition 45 in \cite{MR3495745}$)$. 
\end{definition}
Then we have the formula $($see, equation 8 in \cite{MR1290194}$)$,
\begin{equation} \label{hecke}
    T([g,v])=\sum\limits_{g^\prime KZZ^{\prime}\in G/KZZ^{\prime}}[gg^\prime, \varphi(g^{\prime -1})(v)].
\end{equation}
\subsection{Cartan Decomposition.}
We let,
\begin{align*}
        I&= \{\begin{pmatrix} 
a& b  \\ 
\varpi_D c& d
 \end{pmatrix} \mid a\in\mathcal{O}_D, \hspace{5pt}d\in\mathcal{O}_D,\hspace{5pt} b\in\mathcal{O}_D, \hspace{5pt}c\in\mathcal{O}_D\}, \\
 I(1) &= \{\begin{pmatrix} 
a& b  \\ 
\varpi_D c& d
 \end{pmatrix} \mid a\equiv d \equiv 1 \hspace{5pt}\text{mod}(\varpi_D), \hspace{5pt}b\in\mathcal{O}_D, \hspace{5pt}c\in\mathcal{O}_D\},
\end{align*}
where $I$ is the Iwahori subgroup and $I(1)$ is the pro-$p$-Sylow subgroup of $I$. Also we set,
\begin{equation}\label{alpha}
\alpha=\begin{pmatrix} 
1& 0  \\ 
0 & \varpi_D
 \end{pmatrix},\hspace{5pt} \beta=\begin{pmatrix} 
0& 1  \\ 
\varpi_D & 0
 \end{pmatrix} \hspace{5pt}\text{and}\hspace{5pt} w=\begin{pmatrix} 
0& 1  \\ 
1 & 0
 \end{pmatrix}.
 \end{equation} 
We have the Cartan decomposition of $G$ as follows:
\begin{equation}\label{cartan}
G=\bigsqcup_{n\in \mathbb{N}} KZZ^{\prime}\alpha^{-n}KZZ^{\prime} =\bigsqcup_{n\in \mathbb{N}} \left(IZZ^{\prime}\alpha^{-n}KZZ^{\prime}\sqcup IZZ^{\prime}\beta\alpha^{-n}KZZ^{\prime}\right), 
\end{equation}
where $Z$ is the center of $G$ and $Z^{\prime}$ is the group generated by $\begin{pmatrix} 
\varpi_D& 0  \\ 
0 & \varpi_D
 \end{pmatrix}$, with $\beta$ normalizing $I(1)$.
Note that $KZZ^{\prime}\alpha^{-n}I=KZZ^{\prime}\alpha^{-n}I(1)$, $IZZ^{\prime}\alpha^{-n}I=IZZ^{\prime}\alpha^{-n}I(1)$ and $IZZ^{\prime}\beta\alpha^{-n}I=IZZ^{\prime}\beta\alpha^{-n}I(1)$(see, \cite{MR2018825}, \cite{MR3873949} and \cite{MR1769724}).

Let $\mathcal{A}$ be the Bruhat-Tits tree of $GL_2(k_D)$, and its vertices are in equivariant bijection with the coset $G/KZZ^{\prime}$, as stated in section 2.2 of \cite{MR2018825}. We can describe a model for $\textnormal{ind}_{KZZ^{\prime}}^G\sigma$ as follows. Let $I_0 = \{0\}$ and for $n\in\mathbb{N}$, 
\begin{equation*}
    I_n=\{[\mu_0]+\varpi_D[\mu_1]+\varpi_D^2[\mu_2]+\dots+\varpi_D^{n-1}[\mu_{n-1}]: \mu_{i}\in k_D\}\subset \mathcal{O}_D
\end{equation*}
where $[\cdot]$ is the multiplicative representative in $\mathcal{O}_D$. If $0\leq m\leq n$, let $[\cdot]_m: I_n\longrightarrow I_m$ be the truncation map define by
\begin{equation*}
    \sum_{i=0}^{n-1}[\mu_i]\varpi_D^i\longrightarrow \sum_{i=0}^{m-1}[\mu_i]\varpi_D^i.
\end{equation*}
\begin{lem}\label{witt}
Let, $\mu, \lambda\in \mathbb{F}_{q^w}$, then $[\mu]+[\lambda]\equiv [\mu+\lambda]+\varpi_D^e[P_0(\mu, \lambda)] \hspace{5pt}mod\hspace{5pt} \varpi_D^{e+1}$, where
\begin{equation}\label{P_0}
    P_0(\mu, \lambda)=\frac{\mu^{q^{we}}+\lambda^{q^{we}}-(\mu+\lambda)^{q^{we}}}{\varpi_D^e}.
\end{equation}
\begin{proof}
This can be obtained by following Lemma 2.2 in \cite{MR2833554}. 
\end{proof}
\end{lem}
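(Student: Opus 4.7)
The plan is to exploit the characterizing property of the Teichm\"uller lift: $[\mu]$ is the unique root of $X^{q^w}-X$ in $\mathcal{O}_D$ that reduces to $\mu\in\mathbb{F}_{q^w}$, so in particular $[\mu]^{q^{we}}=[\mu]$, and for any lift $x$ of $\mu$ the sequence $x^{q^{wn}}$ converges to $[\mu]$ in the $\varpi_D$-adic topology with an explicit rate. I would make this rate of convergence precise via a single Frobenius-lifting estimate and then read off the formula for $P_0$ by applying it to the non-Teichm\"uller lift $x=[\mu]+[\lambda]$ of $\mu+\lambda$. The fraction defining $P_0$ is to be parsed inside $\mathcal{O}_D$: take Teichm\"uller lifts of $\mu$ and $\lambda$, perform the arithmetic in characteristic zero, divide by $\varpi_D^e$, and reduce modulo $\varpi_D$ to land back in $\mathbb{F}_{q^w}$.

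The technical heart is the estimate: if $x\equiv y\pmod{\varpi_D^{k}}$ with $k\geq 1$, then $x^{q^w}\equiv y^{q^w}\pmod{\varpi_D^{k+1}}$. Writing $x=y+\varpi_D^k a$ and expanding binomially, each intermediate term carries a factor $\binom{q^w}{j}$ with $1\leq j\leq q^w-1$, which is divisible by $p$ and therefore by $\varpi_D^{ew}$, while the tail term $(\varpi_D^k a)^{q^w}$ has $\varpi_D$-valuation $kq^w$; in both cases the total valuation is at least $k+1$, since $ew\geq 1$.

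Iterating this estimate from the base case $x\equiv\mu\pmod{\varpi_D}$, and using $[\mu]^{q^w}=[\mu]$ at every step, yields $x^{q^{wn}}\equiv[\mu]\pmod{\varpi_D^{n+1}}$ for all $n\geq 1$. Specializing to $n=e$ and $x=[\mu]+[\lambda]$, which reduces to $\mu+\lambda$, gives
\[
([\mu]+[\lambda])^{q^{we}}\equiv[\mu+\lambda]\pmod{\varpi_D^{e+1}}.
\]
Rearranging and inserting the identities $[\mu]^{q^{we}}=[\mu]$ and $[\lambda]^{q^{we}}=[\lambda]$ rewrites this as
\[
[\mu]+[\lambda]-[\mu+\lambda]\equiv [\mu]^{q^{we}}+[\lambda]^{q^{we}}-([\mu]+[\lambda])^{q^{we}}\pmod{\varpi_D^{e+1}}.
\]
Both sides lie in $p\mathcal{O}_D=\varpi_D^{ew}\mathcal{O}_D\subseteq\varpi_D^{e}\mathcal{O}_D$ (for the left-hand side, this is the classical Witt identity inside $W(\mathbb{F}_{q^w})\subset\mathcal{O}_D$), so after dividing by $\varpi_D^e$ and reducing modulo $\varpi_D$ the right-hand side becomes exactly the residue field element $P_0(\mu,\lambda)$ defined in the statement.

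The main obstacle is the valuation bookkeeping inside the Frobenius-lifting estimate, which requires the interplay between $v_{\varpi_D}(p)=ew$ and the $p$-divisibility of the binomial coefficients $\binom{q^w}{j}$ for $1\leq j\leq q^w-1$; once that estimate is in place, the rest of the argument is a short iteration and rearrangement.
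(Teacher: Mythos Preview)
Your argument is correct and is precisely the standard Teichm\"uller--Frobenius convergence proof that the paper defers to by citing Schein's Lemma~2.2; you have simply written out in full what the reference would supply. One small remark: the binomial expansion you invoke tacitly uses commutativity, which is legitimate here because all the elements $[\mu],[\lambda],[\mu+\lambda]$ lie in the commutative subring $W(\mathbb{F}_{q^w})\subset\mathcal{O}_D$, but it is worth saying so explicitly since $\mathcal{O}_D$ itself is noncommutative when $w>1$.
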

For $n\in \mathbb{N}$ and $\mu\in I_n, $ then we define:
\begin{align*}
   & g_{n,\mu}^0=\begin{pmatrix} 
\varpi_D^n & \mu  \\ 
0& 1
 \end{pmatrix}, \hspace{4pt}g_{n,\mu}^1=\begin{pmatrix} 
1 & 0  \\ 
\varpi_D\mu& \varpi_D^{n+1}
 \end{pmatrix},\\
  &g_{0,0}^0=
\begin{pmatrix} 1 & 0  \\ 0& 1\end{pmatrix} 
= \textnormal{Id},\hspace{4pt} g_{0,0}^1=
\begin{pmatrix} 1 & 0  \\ 0 & \varpi_D\end{pmatrix}
=\alpha.
\end{align*}
Also, we have the relation $$\beta g_{n, \mu}^0=g_{n, \mu}^1w.$$ \\With the above notations we write, $$IZZ^{\prime}\alpha^{-n}KZZ^{\prime}=\bigsqcup_{\mu\in I_n}g_{n, \mu}^0KZZ^{\prime} \hspace{4pt}\text{and}\hspace{4pt} IZZ^{\prime}\beta\alpha^{-n}KZZ^{\prime}=\bigsqcup_{\mu\in I_n}g_{n, \mu}^1KZZ^{\prime}.$$ Thus, $g_{n, \mu}^0$ and $g_{n, \mu}^1$ form a system of representatives of $G/KZZ^{\prime}.$ Therefore, we rewrite the Cartan decomposition \eqref{cartan} as follows, 
\begin{equation*}
    G=(\bigsqcup_{\mu, n}g_{n,\mu}^0KZZ^{\prime})\bigsqcup(\bigsqcup_{\mu, n}g_{n,\mu}^1KZZ^{\prime}).
\end{equation*}
Also, for $n\in \mathbb{N}$, we set,

\begin{align*}
X_n^0=IZZ^{\prime}\alpha^{-n}KZZ^{\prime} \hspace{20pt} & \hspace{20pt}X_n^1=IZZ^{\prime}\beta\alpha^{-n}KZZ^{\prime}\\
B_n^0=\bigsqcup_{m<n}X_m^0 \hspace{20pt}&\hspace{20pt}B_n^1=\bigsqcup_{m<n}X_m^1\\
X_n=X_n^0\bigsqcup X_n^1\hspace{20pt} & \hspace{20pt} B_n=B_n^0\bigsqcup B_n^1.
\end{align*}
\subsection{Three Basic Results}
We will often be employing Lucas' classical theorem in modular combinatorics \cite{MR3873949}, which provides a condition for a binomial coefficient $\binom{m}{n}$ to be congruent to zero modulo $p$.
 \begin{thm}{(Lucas's theorem)}\label{lucus}
The following congruence relation hold for $m, n \in \mathbb{Z}_{>0}$ and a prime $p$,
\begin{equation*}
    \binom{m}{n}=\prod_{i=0}^k\binom{m_i}{n_i}\hspace{5pt} (\text{mod} \hspace{5pt}p),
\end{equation*}
where, $m=\sum_{i=0}^k m_ip^i$ and $n=\sum_{i=0}^kn_ip^i$ are the $p$-adic expansion of $m$ and $n$ respectively. By the convention $\binom{m}{n}=0$ if and only if $m_i<n_i$ for some $i$.
\end{thm}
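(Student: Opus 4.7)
The plan is to prove Lucas's theorem by working in the polynomial ring $\mathbb{F}_p[x]$ and comparing coefficients in the binomial expansion of $(1+x)^m$ expressed in two different ways. This is the cleanest route, exploiting the ``freshman's dream'' identity $(1+x)^p \equiv 1+x^p \pmod{p}$, which itself follows from the fact that $p \mid \binom{p}{j}$ for $1 \le j \le p-1$ (since $\binom{p}{j} = \frac{p!}{j!(p-j)!}$ has an unabsorbed factor of $p$ in the numerator).

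First I would establish by induction on $i$ that $(1+x)^{p^i} \equiv 1 + x^{p^i} \pmod{p}$ in $\mathbb{F}_p[x]$; the base case $i=1$ is the freshman's dream just mentioned, and the inductive step raises both sides to the $p$-th power and applies the identity once more. Next, using the $p$-adic expansion $m = \sum_{i=0}^{k} m_i p^i$, I would write
\begin{equation*}
(1+x)^m \;=\; \prod_{i=0}^{k} (1+x)^{m_i p^i} \;=\; \prod_{i=0}^{k}\bigl((1+x)^{p^i}\bigr)^{m_i} \;\equiv\; \prod_{i=0}^{k}(1+x^{p^i})^{m_i} \pmod{p}.
\end{equation*}
Expanding the right-hand side via the ordinary binomial theorem in each factor gives
\begin{equation*}
\prod_{i=0}^{k}(1+x^{p^i})^{m_i} \;=\; \prod_{i=0}^{k}\sum_{n_i=0}^{m_i}\binom{m_i}{n_i} x^{n_i p^i}
\;=\; \sum_{(n_0,\dots,n_k)} \Bigl(\prod_{i=0}^{k}\binom{m_i}{n_i}\Bigr) x^{\sum_i n_i p^i}.
\end{equation*}

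The final step is to compare coefficients of $x^n$ on both sides. On the left, the coefficient of $x^n$ in $(1+x)^m$ is $\binom{m}{n}$ by the classical binomial theorem. On the right, because each $n_i$ ranges in $\{0,1,\dots,m_i\} \subseteq \{0,\dots,p-1\}$, the exponents $\sum_i n_i p^i$ are the $p$-adic representations of the integers they form, and hence are pairwise distinct as the tuple $(n_0,\dots,n_k)$ varies. Therefore the coefficient of $x^n$ on the right is exactly $\prod_i \binom{m_i}{n_i}$, where $n_i$ are the $p$-adic digits of $n$. If some digit $n_i$ of $n$ exceeds $m_i$, then $n$ does not appear as an exponent in the product expansion, forcing $\binom{m}{n} \equiv 0$, which matches the convention $\binom{m_i}{n_i} = 0$ when $m_i < n_i$.

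The only step that requires real care is the uniqueness argument in comparing coefficients, namely verifying that distinct tuples $(n_0,\dots,n_k)$ with $0 \le n_i \le m_i \le p-1$ produce distinct exponents $\sum_i n_i p^i$; this is immediate from the uniqueness of base-$p$ representations, so there is no genuine obstacle. All other steps are formal manipulation in $\mathbb{F}_p[x]$, and the argument is complete upon noting that both expressions for the coefficient of $x^n$ must agree modulo $p$.
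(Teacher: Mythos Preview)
Your proof is correct and complete; the generating-function argument in $\mathbb{F}_p[x]$ via the freshman's dream is the standard clean proof of Lucas's theorem, and you have handled the uniqueness-of-digits point carefully. The paper itself does not prove this statement at all: it simply writes ``See \cite{MR2557851}'' and defers to the literature. So there is no approach to compare against --- you have supplied a full self-contained argument where the paper only gives a citation.
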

\begin{proof}
See \cite{MR2557851}.
\end{proof}
\begin{lem}\label{poly}
For $n\in \mathbb{N}$ and $\mu \in I_n$, let $\varphi: I_n \longrightarrow \overline{\mathbb{F}}_p$ be a set-theoretic map for $n\geq 1$. Then there exists a unique polynomial $c(z_0, z_1, \cdots, z_{n-1}) \in \overline{\mathbb{F}}_p[z_0, z_1, \cdots, z_{n-1}]$, such that each variable has degree at most $q^w-1$ and $\varphi(\mu) = c(\mu_0, \mu_1, \cdots, \mu_{n-1})$ for all $\mu \in I_n$.
\end{lem}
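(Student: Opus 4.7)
The plan is to reduce this statement to the classical fact that every function on a power of a finite field is given by a unique polynomial whose degree in each variable is strictly less than the size of the field. First I would observe that the parametrization $\mu = \sum_{i=0}^{n-1}\varpi_D^i[\mu_i] \mapsto (\mu_0,\ldots,\mu_{n-1})$ is a bijection $I_n \xrightarrow{\sim} k_D^{\,n} = \mathbb{F}_{q^w}^n$, so $\varphi$ may be viewed as an arbitrary set-theoretic function $\mathbb{F}_{q^w}^n \to \overline{\mathbb{F}}_p$, and the claim becomes purely a statement about polynomial interpolation on $\mathbb{F}_{q^w}^n$.

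My main tool is a dimension count. Let $V \subset \overline{\mathbb{F}}_p[z_0,\ldots,z_{n-1}]$ be the $\overline{\mathbb{F}}_p$-subspace consisting of polynomials with $\deg_{z_i}c \leq q^w - 1$ for each $i$. A basis is given by the monomials $\prod_i z_i^{a_i}$ with $0 \leq a_i \leq q^w - 1$, so $\dim V = (q^w)^n$. The space $\mathrm{Fun}(I_n,\overline{\mathbb{F}}_p)$ of all set-theoretic maps also has dimension $|I_n| = (q^w)^n$. Since the evaluation map $\mathrm{ev}\colon V \to \mathrm{Fun}(I_n,\overline{\mathbb{F}}_p)$, $c \mapsto (\mu \mapsto c(\mu_0,\ldots,\mu_{n-1}))$, is $\overline{\mathbb{F}}_p$-linear between spaces of the same finite dimension, it suffices to prove that $\mathrm{ev}$ is injective; then both existence of $c$ (surjectivity) and uniqueness (injectivity) follow simultaneously.

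For injectivity I would induct on $n$. The base case $n=1$ is immediate: a nonzero univariate polynomial of degree at most $q^w - 1$ cannot vanish at all $q^w$ points of $\mathbb{F}_{q^w}$. For the inductive step, given $c \in V$ with $c(\mu_0,\ldots,\mu_{n-1}) = 0$ for every tuple, write $c = \sum_{j=0}^{q^w-1} c_j(z_0,\ldots,z_{n-2})\, z_{n-1}^j$ with $c_j \in \overline{\mathbb{F}}_p[z_0,\ldots,z_{n-2}]$ of degree at most $q^w - 1$ in each variable. For each fixed $(\mu_0,\ldots,\mu_{n-2}) \in \mathbb{F}_{q^w}^{n-1}$ the univariate polynomial $\sum_j c_j(\mu_0,\ldots,\mu_{n-2})\,z_{n-1}^j$ vanishes on $\mathbb{F}_{q^w}$, so the base case forces each coefficient $c_j(\mu_0,\ldots,\mu_{n-2})$ to be zero. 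The inductive hypothesis then yields $c_j = 0$, hence $c = 0$.

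There is no substantive obstacle — the result is classical, and the only care needed is to record the identification of $I_n$ with $k_D^{\,n}$ coming from the chosen multiplicative representatives. If one prefers an explicit construction to complement the abstract argument, one can exhibit $c$ by Lagrange interpolation, $c(z_0,\ldots,z_{n-1}) = \sum_{\nu \in k_D^{\,n}} \varphi(\nu) \prod_{i=0}^{n-1} \delta_{\nu_i}(z_i)$, where $\delta_a(z) = \prod_{b \in k_D,\, b \neq a}(z-b)/(a-b)$ is the degree $q^w - 1$ indicator polynomial at $a \in k_D$; this is manifestly in $V$ and realizes $\varphi$.
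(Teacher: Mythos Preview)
Your proof is correct and follows essentially the same inductive strategy as the paper: both reduce to the $n=1$ case (where the paper invokes the invertibility of the Vandermonde matrix, equivalent to your observation that a nonzero polynomial of degree at most $q^w-1$ cannot vanish on all of $\mathbb{F}_{q^w}$) and then peel off the last variable in the inductive step. The only cosmetic difference is that the paper builds the polynomial constructively while you establish injectivity of the evaluation map and obtain surjectivity from the dimension count; these are two sides of the same argument.
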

\begin{proof}
We use Lemma 3.1.6 from \cite{MR2018825} and Lemma 2.1 from \cite{MR2833554} to prove our claim via induction. For $n = 1$, we obtain a set map $\varphi: \mathbb{F}_{q^w}\longrightarrow \overline{\mathbb{F}}_p$ such that the matrix

\begin{equation}\label{invert}
    \begin{pmatrix} 
1& 0&\cdots & \cdots& 0  \\ 
1& 1&\cdots & \cdots& 1\\
1& 2&2^2 & \cdots& 2^{q^w-1}\\
\vdots& \vdots&\vdots & \vdots& \vdots\\
1& (q^w-1)&(q^w-1)^2 & \cdots& (q^w-1)^{q^w-1}\\ 
 \end{pmatrix}
\end{equation}
is an invertible modulo $q^w-1$. Thus, there exist a unique $(c_0, c_1, \cdots, c_{q^w-1})\in \overline{\mathbb{F}}_p^{q^w-1} $ such that 
\begin{equation*}
    \sum_{i=0}^{q^w-1}c_i\mu_0^i=\varphi(\mu_0).
\end{equation*}
Assume that the claim is true for $n-1$, $n\geq 2$ and suppose $\mu\in I_{n-1}$. Then there exist unique $(c_0(\mu), c_1(\mu),\cdots, c_{q^w-1}(\mu))\in\overline{\mathbb{F}}_p^{q^w-1}$ such that
\begin{equation*}
    \sum_{i=1}^{q^w-1}c_i(\mu)\lambda^i=\varphi(\mu+[\lambda]\varpi_D^{n-1}),
\end{equation*}
where $c_i(\mu)$ is a unique polynomial of $\mu_0,\mu_1,\cdots, \mu_{n-2}$ for all $i$. The proof is complete.
\end{proof}
\begin{lem}\label{modp} We have, 
\begin{align*}
    \sum_{\lambda\in\mathbb{F}_{q^w}}\lambda^j\hspace{5pt} =\hspace{5pt} 0 \hspace{5pt}&if\hspace{5pt} j\hspace{4pt}\neq \hspace{4pt}q^w-1,\\
    \sum_{\lambda\in\mathbb{F}_{q^w}}\lambda^j\hspace{5pt} =\hspace{5pt} -1 \hspace{5pt}&if\hspace{5pt} j\hspace{4pt}= \hspace{4pt}q^w-1
\end{align*}
where $j\in\{0, 1, 2, \cdots, q^{w}-1\}$.
\end{lem}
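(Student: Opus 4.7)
The plan is to split according to the value of $j$ and reduce to a character-sum calculation on the cyclic group $\mathbb{F}_{q^w}^*$ of order $q^w-1$. First I would dispatch the two boundary cases. When $j=0$, the sum collapses to $\sum_{\lambda\in\mathbb{F}_{q^w}}1 = q^w$ (with the convention $0^0=1$), which vanishes in $\overline{\mathbb{F}}_p$ since $q^w=p^{wf}$. When $j=q^w-1$, Fermat's little theorem for finite fields gives $\lambda^{q^w-1}=1$ for every nonzero $\lambda\in\mathbb{F}_{q^w}$, while $0^{q^w-1}=0$, so the sum equals $q^w-1\equiv -1\pmod{p}$.

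For the remaining range $1\le j\le q^w-2$, I would fix a generator $g$ of $\mathbb{F}_{q^w}^*$ and compute
\[
\sum_{\lambda\in\mathbb{F}_{q^w}}\lambda^j \;=\; \sum_{k=0}^{q^w-2} g^{jk} \;=\; \frac{g^{j(q^w-1)}-1}{g^j-1}.
\]
The numerator vanishes because $g^{q^w-1}=1$, and the denominator is nonzero precisely because $j$ is not a positive multiple of $q^w-1$ in the specified range. Hence the sum is zero, completing the third case.

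There is no real obstacle here; the only subtlety worth mentioning is that the geometric-series step requires $g^j\neq 1$, which is exactly where the hypothesis on $j$ enters. The lemma is essentially the classical statement that a power sum on $\mathbb{F}_{q^w}$ vanishes unless the exponent is a positive multiple of the order of the multiplicative group — isolating it here is useful because the subsequent arguments on $I(1)$-invariants repeatedly need to recognize which sums over $\mathbb{F}_{q^w}$ survive modulo $p$.
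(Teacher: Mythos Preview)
Your argument is correct and is precisely the kind of direct computation the paper invokes; the paper's own proof simply reads ``This can be obtained by a direct computation'' without further detail. The only cosmetic point is that in the middle case your displayed equality $\sum_{\lambda\in\mathbb{F}_{q^w}}\lambda^j=\sum_{k=0}^{q^w-2}g^{jk}$ silently uses $0^j=0$ for $j\ge 1$, which you might state explicitly.
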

\begin{proof}
This can be obtain by a direct computation.
\end{proof}

\section{Properties of I(1)-invariants} \label{}
In this section, we present techniques and proofs for determining $(\textnormal{ind}_{KZZ^{\prime}}^G\sigma/T)^{I(1)}$. We denote $r=\sum_{j=0}^{wf-1}p^jr_j$, where $0 \leq r_j \leq p-1$, for $0\leq j \leq wf-1$, and $\vec{r}$ is the vector $(r_0,\cdots,r_{wf-1})$.

First, let us see the following decomposition of any element in $I(1)$ for $a, b, c, d\in \mathcal{O}_D$,
\begin{equation*}
  \begin{pmatrix} 
1+\varpi_D a& b  \\ 
\varpi_Dc& 1+\varpi_Dd
 \end{pmatrix}  =\begin{pmatrix} 
1& b(1+\varpi_Dd)^{-1}  \\ 
0& 1
 \end{pmatrix}\begin{pmatrix} 
1& 0  \\ 
\varpi_D cz^{-1}& 1
 \end{pmatrix}\begin{pmatrix} 
z& 0  \\ 
0& 1+\varpi_Dd
 \end{pmatrix}
\end{equation*}
where $z=1+\varpi_Da-b(1+\varpi_Dd)^{-1}\varpi_Dc$. Therefore, a given vector is $I(1)$ invariant if it's invariant under the following three types of matrices for $a, b, c, d\in \mathcal{O}_D$,
\begin{equation*}
    \begin{pmatrix} 
1& b  \\ 
0& 1
 \end{pmatrix},\hspace{5pt}\begin{pmatrix} 
1& 0  \\ 
\varpi_Dc& 1
 \end{pmatrix},\hspace{5pt} \begin{pmatrix} 
1+\varpi_Da& 0  \\ 
0& 1
 \end{pmatrix}.
\end{equation*}
\begin{lem}\label{lem1}
Let $c_{\vec{i}}\in \overline{\mathbb{F}}_p$ and $v=\sum\limits_{\vec{i}=0}^{\vec{r} }c_{\vec{i}}
\bigotimes\limits_{j=0}^{wf-1} x_j^{r_j-i_j}y_j^{i_j}.$ Recall that $\alpha
=\begin{pmatrix} 
1& 0  \\ 
0& \varpi_D
\end{pmatrix}$, 
$w
=\begin{pmatrix} 
0& 1  \\ 
1& 0 \end{pmatrix}$
defined in \eqref{alpha} and we set $w_\lambda=\begin{pmatrix} 
0& 1  \\ 
1& -\lambda
 \end{pmatrix}$ for $\lambda\in I_1$. Then we have,
 \begin{align}\label{lem2eq1}
     &\sigma(w)\varphi(\alpha^{-1})\sigma(w_\lambda)(v)= \sum\limits_{\vec{i}=0}^{\vec{r} }c_{\vec{i}}
(-\lambda)^i\bigotimes\limits_{j=0}^{wf-1} x_j^{r_j},\\\label{lem2eq2}
&\varphi(\alpha^{-1})\sigma(w_\lambda w)(v)= \sum\limits_{\vec{i}=0}^{\vec{r} }c_{\vec{i}}
(-\lambda)^{r-i}\bigotimes\limits_{j=0}^{wf-1} y_j^{r_j},
 \end{align}
\end{lem}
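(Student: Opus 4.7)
The plan is to do a direct computation using the explicit action formula \eqref{weight} for $\sigma$ together with the description of the Hecke projector $\varphi(\alpha^{-1})$. The first step is to record the shape of $\varphi(\alpha^{-1})$: by Proposition 45 of \cite{MR3495745} (as invoked in Definition \ref{Hecke}), $\varphi(\alpha^{-1}) \in \textnormal{End}(V_\sigma)$ is the linear projection onto the $\overline{\mathbb{F}}_p$-line spanned by $\bigotimes_{j=0}^{wf-1} y_j^{r_j}$. Concretely, on the standard monomial basis it fixes $\bigotimes_j y_j^{r_j}$ and annihilates every other basis vector $\bigotimes_j x_j^{r_j - i_j} y_j^{i_j}$ with $\vec{i} \neq \vec{r}$.

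For \eqref{lem2eq1}, I would first apply \eqref{weight} to $w_\lambda = \begin{pmatrix} 0 & 1 \\ 1 & -\lambda \end{pmatrix}$ (so $a=0,\, b=1,\, c=1,\, d=-\lambda$) to obtain
$$\sigma(w_\lambda)(v) = \sum_{\vec{i}} c_{\vec{i}} \bigotimes_{j=0}^{wf-1} y_j^{r_j - i_j}\bigl(x_j + (-\lambda)^{p^j} y_j\bigr)^{i_j},$$
and then expand each binomial and read off the coefficient of $\bigotimes_j y_j^{r_j}$. The only monomial contributing in the $j$-th factor is obtained by choosing the $(-\lambda)^{p^j} y_j$ summand all $i_j$ times, producing the scalar $\prod_j (-\lambda)^{p^j i_j} = (-\lambda)^{i}$ with $i = \sum_j p^j i_j$. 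Applying $\varphi(\alpha^{-1})$ then returns $\bigl(\sum_{\vec{i}} c_{\vec{i}} (-\lambda)^i\bigr) \bigotimes_j y_j^{r_j}$, and finally $\sigma(w)$, which swaps $x_j \leftrightarrow y_j$ in each factor, yields \eqref{lem2eq1}.

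For \eqref{lem2eq2}, the key simplification is that $w_\lambda w = \begin{pmatrix} 1 & 0 \\ -\lambda & 1 \end{pmatrix}$, so \eqref{weight} gives
$$\sigma(w_\lambda w)(v) = \sum_{\vec{i}} c_{\vec{i}} \bigotimes_{j=0}^{wf-1} \bigl(x_j + (-\lambda)^{p^j} y_j\bigr)^{r_j - i_j} y_j^{i_j}.$$
The same binomial expansion now yields the scalar $\prod_j (-\lambda)^{p^j (r_j - i_j)} = (-\lambda)^{r - i}$ as the coefficient of $\bigotimes_j y_j^{r_j}$; applying $\varphi(\alpha^{-1})$ finishes the proof.

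The main, and only, obstacle is bookkeeping: one must verify that the product of Frobenius-twisted scalars $(-\lambda)^{p^j}$ collected across the $wf$ tensor factors multiplies out cleanly as a single power of $-\lambda$. This is automatic since $-\lambda \in \overline{\mathbb{F}}_p$ is a single scalar, so $\prod_j \bigl[(-\lambda)^{p^j}\bigr]^{i_j} = (-\lambda)^{\sum_j p^j i_j} = (-\lambda)^i$, and the analogous identity holds for $r - i$, with no $p$-adic carries required.
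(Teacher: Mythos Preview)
Your proposal is correct and follows essentially the same approach as the paper: a direct computation using the action formula \eqref{weight} together with the description of $\varphi(\alpha^{-1})$ as the projector onto $\bigotimes_j y_j^{r_j}$. The only organizational difference is that the paper first multiplies the matrices (treating $\varphi(\alpha^{-1})$ as the singular matrix $\begin{pmatrix}0&0\\0&1\end{pmatrix}$ and obtaining $\begin{pmatrix}1&-\lambda\\0&0\end{pmatrix}$, respectively $\begin{pmatrix}0&0\\-\lambda&1\end{pmatrix}$) and then applies the action once, whereas you apply the actions sequentially and extract the relevant coefficient; the arithmetic is identical.
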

where $\varphi$ is the function define in \eqref{phi} and note that in the basis  $\bigotimes_{j=0}^{wf-1}(x_j^{r_j-i_j}y_j^{i_j})_{0\leq i_j \leq r_j}$ of $V_{\sigma}$, we get $\varphi(\alpha^{-1})=\begin{pmatrix} 0& 0  \\ 0& 1\end{pmatrix}$, (see, section 3 in \cite{MR2018825}).
\begin{proof}
We have
\begin{align*}
\sigma(w)\varphi(\alpha^{-1})\sigma(w_\lambda)(v) &= w\cdot\begin{pmatrix} 0& 0  \\ 0& 1\end{pmatrix}
 \cdot w_{\lambda}(v)\nonumber\\
 &=\begin{pmatrix} 1& -\lambda  \\ 0& 0\end{pmatrix}(v). \label{lem2eq3}
 \end{align*}
 Then the action given in \eqref{weight} verifies \eqref{lem2eq1} and we get   
 
\begin{equation*}
\sigma(w)\varphi(\alpha^{-1})\sigma(w_\lambda)(v)=\sum\limits_{\vec{i}=0}^{\vec{r} }c_{\vec{i}}
(-\lambda)^i\bigotimes\limits_{j=0}^{wf-1} x_j^{r_j}.
 \end{equation*}
 Similarly, for \eqref{lem2eq2} we have
 \begin{align*}
 \varphi(\alpha^{-1})\sigma(w_\lambda w)(v)&= \begin{pmatrix} 0& 0  \\ 0& 1\end{pmatrix}\cdot ww_{\lambda}(v)\\
&=\begin{pmatrix} 0& 0  \\ -\lambda & 1\end{pmatrix}(v)\\    
&=\sum\limits_{\vec{i}=0}^{\vec{r}}c_{\vec{i}}(-\lambda)^{r-i}
\bigotimes\limits_{j=0}^{wf-1} y_j^{r_j}.
 \end{align*}
\end{proof}
\subsection{Action of the Hecke operator.}
\begin{prop}\label{T}
Let $T$ be a function in $\textnormal{End}_G(\textnormal{ind}_{KZZ^{\prime}}^G\sigma)$ as define in \eqref{hecke}  and $v=\sum\limits_{\vec{i}=0}^{\vec{r} }c_{\vec{i}}
\bigotimes\limits_{j=0}^{wf-1} x_j^{r_j-i_j}y_j^{i_j}\in V_{\sigma}$, where $c_{\vec{i}}\in \overline{\mathbb{F}}_p$. Then for $n\geq 1$ and $\mu \in I_n$, we have
\begin{align}\label{im(T)0}
    T([g_{n,\mu}^0, v])=\sum\limits_{\lambda\in I_1}[g_{n+1,\mu+\varpi_D^n\lambda}^0,&\sum\limits_{\vec{i}=0}^{\vec{r} }c_{\vec{i}}
(-\lambda)^i\bigotimes\limits_{j=0}^{wf-1} x_j^{r_j}]\\
&+[g_{n-1,[\mu]_{n-1}}^0, c_{\vec{r}}
\bigotimes\limits_{j=0}^{wf-1} ({\mu_{n-1}^{p^j}}x_j+y_j)^{r_j} ],\nonumber\\
T([g_{n,\mu}^1, v])=\sum\limits_{\lambda\in I_1}[g_{n+1,\mu+\varpi_D^n\lambda}^1,&\sum\limits_{\vec{i}=0}^{\vec{r} }c_{\vec{i}}(-\lambda)^{r-i}
\bigotimes\limits_{j=0}^{wf-1}y_j^{r_j}]\nonumber\\
&+[g_{n-1,[\mu]_{n-1}}^1, c_{\vec{0}}
\bigotimes\limits_{j=0}^{wf-1} {(x_j+\mu_{n-1}^{p^j}}y_j)^{r_j} ],\nonumber
\end{align}
and for $n=0$,
\begin{align}
    &T([\textnormal{Id}, v])=\sum\limits_{\lambda\in I_1}[g_{1,\lambda}^0,\sum\limits_{\vec{i}=0}^{\vec{r} }c_{\vec{i}}
(-\lambda)^i\bigotimes\limits_{j=0}^{wf-1} x_j^{r_j} ]+[\alpha, c_{\vec{r}}
\bigotimes\limits_{j=0}^{wf-1} y_j^{r_j} ],\label{id}\\
&T([\alpha, v])=\sum\limits_{\lambda\in I_1}[g_{1,\lambda}^1, \sum\limits_{\vec{i}=0}^{\vec{r} }c_{\vec{i}}(-\lambda)^{r-i}\bigotimes\limits_{j=0}^{wf-1} y_j^{r_j} ]+[\textnormal{Id}, c_{\vec{0}}
\bigotimes\limits_{j=0}^{wf-1} x_j^{r_j} ].\nonumber
\end{align}
\begin{proof}
The result follows from formulas (4) to (9) in \cite{MR2018825} and Lemma \ref{lem1}.
\end{proof}
\end{prop}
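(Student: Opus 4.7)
The plan is to apply the general Hecke formula \eqref{hecke} directly, using two inputs: a set of right coset representatives of $G/KZZ^{\prime}$ meeting the support of $\varphi$, and the polynomial action of $\varphi$ computed in Lemma \ref{lem1}. Since $T$ is supported on the double coset $KZZ^{\prime}\alpha KZZ^{\prime}$, only those $g^{\prime}$ with $g^{\prime -1} \in KZZ^{\prime}\alpha KZZ^{\prime}$ contribute to the sum, and a complete set of such representatives can be taken to be $\{\alpha w_\lambda^{-1}\}_{\lambda \in I_1} \sqcup \{\alpha w\}$, giving exactly $q^w + 1$ terms. This mirrors formulas (4)--(9) of \cite{MR2018825}, with $\varpi_D$ replacing $\varpi_F$ and $I_1 \cong k_D$ replacing $\mathcal{O}_F/(\varpi_F)$.

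For $[g_{n,\mu}^0, v]$ with $n\geq 1$, I would then carry out the $q^w+1$ coset identifications. For $\lambda\in I_1$, a direct matrix calculation shows that $g_{n,\mu}^0\cdot\alpha w_\lambda^{-1}$ lies in the coset $g_{n+1,\mu+\varpi_D^n\lambda}^0 KZZ^{\prime}$, producing the ascending summands. For $\alpha w$, dividing the relevant bottom row by $\varpi_D$ identifies $g_{n,\mu}^0\cdot\alpha w$ with $g_{n-1,[\mu]_{n-1}}^0 \cdot h$ for an explicit $h\in KZZ^{\prime}$ whose residue mod $\varpi_D$ is the lower unipotent matrix with entry $\mu_{n-1}$. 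Applying equation \eqref{lem2eq1} of Lemma \ref{lem1} to the ascending terms gives the scalar $\sum_{\vec{i}} c_{\vec{i}}(-\lambda)^i\bigotimes_j x_j^{r_j}$; while for the descending term, pushing $\sigma(h)$ into the second slot via the $K$-action \eqref{weight} transforms $\bigotimes_j x_j^{r_j-i_j} y_j^{i_j}$ into $\bigotimes_j(\mu_{n-1}^{p^j}x_j + y_j)^{r_j-i_j} y_j^{i_j}$, after which the projection character of $\varphi(\alpha^{-1})$ kills every contribution with $\vec{i}\neq \vec{r}$, leaving exactly $c_{\vec r}\bigotimes_j(\mu_{n-1}^{p^j}x_j + y_j)^{r_j}$.

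The formula for $[g_{n,\mu}^1, v]$ is obtained by transferring the $g^0$ case through the identity $\beta g_{n,\mu}^0 = g_{n,\mu}^1 w$, so that $\varphi(\alpha^{-1})\sigma(w_\lambda)$ is replaced by $\varphi(\alpha^{-1})\sigma(w_\lambda w)$; equation \eqref{lem2eq2} then produces the $(-\lambda)^{r-i}\bigotimes_j y_j^{r_j}$ scalar, and the descending term is the mirror image, with $\mu_{n-1}$ now entering as a shear from the other side. The $n=0$ identities \eqref{id} arise as specializations: since $g_{0,0}^0=\textnormal{Id}$ and $g_{0,0}^1=\alpha$ are the endpoints of the tree, the ``descending'' coset produces the isolated boundary term $[\alpha,\, c_{\vec r}\bigotimes_j y_j^{r_j}]$ or $[\textnormal{Id},\, c_{\vec 0}\bigotimes_j x_j^{r_j}]$.

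I expect the main obstacle to be the descending summand: pinning down the precise $KZZ^{\prime}$-element $h$ in $g_{n,\mu}^0 \alpha w = g_{n-1,[\mu]_{n-1}}^0 h$ and transporting its action through $\sigma$ to produce the polynomial $\bigotimes_j(\mu_{n-1}^{p^j}x_j + y_j)^{r_j}$ with the correct Frobenius twists demanded by the tensor factorization \eqref{weight}. Once that identification is pinned down, the ascending terms follow from a direct application of Lemma \ref{lem1} and the whole computation reduces to bookkeeping parallel to formulas (4)--(9) of \cite{MR2018825}.
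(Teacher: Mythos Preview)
Your proposal is correct and follows exactly the approach the paper takes: the paper's proof consists of a single sentence citing formulas (4)--(9) of \cite{MR2018825} together with Lemma \ref{lem1}, and your write-up is simply a detailed unpacking of precisely that argument. No alternative route or missing idea is involved.
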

\subsection{Pro-$p$ Iwahori invariants of $\textnormal{ind}_{KZZ^{\prime}}^G\sigma$.}
We can now define the elements of $\textnormal{ind}_{KZZ^{\prime}}^G\sigma$ for the weight $\sigma$ as follows:
\begin{align*}
    &s_n^k=\sum\limits_{\mu\in I_n}[g_{n, \mu}^0,  \mu^{k}_{n-1}\bigotimes\limits_{j=0}^{wf-1}x_j^{r_j}], \hspace{5pt}where\hspace{5pt} 0\leq k \leq q^w-1,\\
    &t_n^s=\sum\limits_{\mu\in I_n}[g_{n, \mu}^0,  \bigotimes\limits_{s\neq j=0}^{wf-1}x_j^{r_j}\otimes x_s^{r_s-1}y_s]\hspace{5pt}where\hspace{5pt} 0\leq s \leq wf-1. 
\end{align*}
\begin{prop}\label{prop9}
A basis for the $\overline{\mathbb{F}}_p$-vector space of $I(1)$-invariants of $\textnormal{ind}_{KZZ^{\prime}}^G\sigma$ for weight $\sigma$ is given by the set $\{s_n^0, \beta s_n^0 : n\in \mathbb{N}\}$. Furthermore, we have 
\begin{enumerate}
    \item If $0< r \leq q^w-1$, the $KZZ^{\prime}$-module generated by $s_n^0$ is isomorphic to $\sigma$ and hence irreducible. Also, the $KZZ^{\prime}$-module generated by $\beta s_n^0$ is isomorphic to a reducible $(q^w+1)$-dimensional principal series for all $n\geq 1.$ 
    \item If $r = 0$, the element $s_n^0+\beta s_{n-1}^0$ generates an irreducible one dimensional $KZZ^{\prime}$-module isomorphic to $\sigma$ for all $n \geq 0$.
    \item If $r=q^w-1$, then $s_n^0+\beta s_{n-1}^0$ generates an irreducible one dimensional $KZZ^{\prime}$-module for all $n\geq 1.$
\end{enumerate}
\end{prop}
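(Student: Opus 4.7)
The plan is to split the proof into two independent parts: (A) establishing that $\{s_n^0,\beta s_n^0:n\in\mathbb{N}\}$ is indeed a basis of the $I(1)$-invariants, and (B) computing the $KZZ'$-module structure generated by each basis element (or by the combinations $s_n^0+\beta s_{n-1}^0$ in the degenerate cases). For (A), the key tool is the Cartan decomposition written in the form $G=\bigsqcup_{n,\mu}g_{n,\mu}^0 KZZ'\,\sqcup\,\bigsqcup_{n,\mu}g_{n,\mu}^1 KZZ'$. Since $I(1)\subset I$, the action of $I(1)$ preserves each piece $X_n^0$ and $X_n^1$, so the projection of any $I(1)$-invariant vector onto each such piece is separately $I(1)$-invariant. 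This lets me fix $n$ and the support $X_n^\epsilon$ throughout the argument.

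Next I would verify that $s_n^0$ and $\beta s_n^0$ are $I(1)$-invariant, invoking the three-factor decomposition of $I(1)$ recorded at the beginning of Section 3. For the upper unipotent $\begin{pmatrix}1&b\\0&1\end{pmatrix}$ acting on $s_n^0$, direct multiplication gives $\begin{pmatrix}1&b\\0&1\end{pmatrix}g_{n,\mu}^0=g_{n,\mu+b}^0$, so the sum over $\mu\in I_n$ only permutes the cosets (with any correction absorbed into the vector slot via $\sigma$ of an upper triangular unipotent, which fixes $\bigotimes x_j^{r_j}$). For the diagonal unit factor, $\bigotimes x_j^{r_j}$ is an eigenvector of $\sigma\begin{pmatrix}1+\varpi_D a&0\\0&1\end{pmatrix}$ and the eigenvalue trivializes mod $\varpi_D$, so each term is separately fixed. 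The lower unipotent case requires the Witt-style reindexing from Lemma \ref{witt}. The invariance of $\beta s_n^0$ is deduced from the identity $\beta g_{n,\mu}^0=g_{n,\mu}^1 w$ and the fact that $\beta$ normalizes $I(1)$.

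For completeness, take an $I(1)$-invariant $v$ supported on $X_n^0$ and write $v=\sum_{\mu\in I_n}[g_{n,\mu}^0,v_\mu]$ with $v_\mu=\sum_{\vec{i}}c_{\vec{i}}(\mu)\bigotimes x_j^{r_j-i_j}y_j^{i_j}$. Apply Lemma \ref{poly} to write each $c_{\vec{i}}(\mu)$ as a polynomial of degree $<q^w$ in each of $\mu_0,\dots,\mu_{n-1}$. Imposing invariance under the upper unipotent on each generator $\begin{pmatrix}1&\varpi_D^{n-1}[\lambda]\\0&1\end{pmatrix}$ and summing over $\lambda\in\mathbb{F}_{q^w}$ produces equations of the form $\sum_\lambda\lambda^j(\cdots)=0$; by Lemma \ref{modp} only $j=q^w-1$ survives, which together with Lucas's Theorem \ref{lucus} forces all but the coefficient of $\bigotimes x_j^{r_j}$ to vanish and forces that remaining coefficient to be a scalar. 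Thus $v$ is a scalar multiple of $s_n^0$; the analogous argument (transported by $\beta$) handles $X_n^1$.

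For (B), I would compute the $KZZ'$-orbit of each generator directly. Since $\bigotimes x_j^{r_j}$ is a highest weight vector of $\sigma$ whose stabilizer in $K$ is exactly the upper triangular Borel mod $K(1)$, the $KZZ'$-span of $s_n^0$ matches the $K$-module $\sigma$ Frobenius-reciprocally, yielding the claimed isomorphism in (1); for $\beta s_n^0$ one realizes the orbit as the full induced module from the Iwahori character, which has dimension $q^w+1$ and is the claimed reducible principal series. The degenerate cases (2) and (3) are precisely where $\bigotimes x_j^{r_j}$ and $\bigotimes y_j^{r_j}$ coincide up to a character of the diagonal torus (namely when $r=0$ or $r=q^w-1$), forcing $s_n^0$ and $\beta s_{n-1}^0$ to lie in the same one-dimensional $KZZ'$-submodule, generated by their sum. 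The main obstacle is the completeness step in Part (A): the cascade from Lemmas \ref{poly}, \ref{witt}, \ref{modp} and Theorem \ref{lucus} into a clean statement that only $s_n^0$ survives requires careful bookkeeping of how $\mu+b$ is re-reduced into $I_n$ and how this reduction interacts with the polynomial expansion of the coefficients.
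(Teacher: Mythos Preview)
Your outline for Part (A) is broadly sound and in fact more explicit than the paper's treatment, which simply points to Barthel--Livn\'e (Propositions 14 and 15 of \cite{MR1290194}) for the fact that $s_n^0$ and $\beta s_n^0$ exhaust the $I(1)$-invariants. One small correction: to force $v_\mu\in\overline{\mathbb{F}}_p\bigotimes x_j^{r_j}$ you want the element $\begin{pmatrix}1&\varpi_D^n[\lambda]\\0&1\end{pmatrix}$, not $\varpi_D^{n-1}$, since the former commutes past $g_{n,\mu}^0$ to act as $\sigma\begin{pmatrix}1&\lambda\\0&1\end{pmatrix}$ on the vector slot without disturbing the coset; the elements with lower powers of $\varpi_D$ are then used to pin down the coefficient as constant in $\mu$.

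The real gap is in Part (B). For (1) the paper does not argue directly for general $n$; instead it establishes the $KZZ'$-module structure for $n=0$ (explicit coset computation for $\beta s_0^0$, plus Frobenius reciprocity for the $(q^w+1)$-dimensionality) and then invokes the fact that $T$ is an \emph{injective} map of $G$-modules with $T^n(s_0^0)=s_n^0$ and $T^n(\beta s_0^0)=\beta s_n^0$, transporting the isomorphism type to all $n$. You never mention $T$, and without it you would have to redo the orbit computation at each level.

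More seriously, your treatment of (2) and (3) is incorrect as stated. The elements $s_n^0$ and $\beta s_{n-1}^0$ are supported on the disjoint pieces $X_n^0$ and $X_{n-1}^1$, hence linearly independent; they cannot both lie in a one-dimensional $KZZ'$-submodule. What is true is that their \emph{sum} is a $K$-eigenvector, and this is not a formal consequence of the $I$-characters on $\bigotimes x_j^{r_j}$ and $\bigotimes y_j^{r_j}$ coinciding. The paper handles (2) by computing $T(s_0^0)=s_1^0+\beta s_0^0$ when $r=0$ and then writing $s_n^0+\beta s_{n-1}^0=T^n(s_0^0)-T^{n-2}(s_0^0)$, so the one-dimensionality is inherited from that of $s_0^0$ via $G$-equivariance of $T$. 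For (3) it verifies directly, using the coset representatives of $K/I$, that $s_1^0+\beta s_0^0$ is fixed by each representative when $r=q^w-1$, and again uses injectivity of $T$ to pass to general $n$. Your character-coincidence heuristic does not substitute for either computation.
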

\begin{proof}
\begin{enumerate}
    \item 
We utilize the methods presented in the proof of Proposition 2.5 in \cite{MR3195399}\\
Following Proposition 14 in \cite{MR1290194} for the case $F=D$, we set
\begin{equation*}
S(G, Sym^{\vec{r}}\overline{\mathbb{F}}_p^2)^{I(1)}=\{\varphi | 
\hspace{4pt}\varphi(kgi)=\sigma(k)\varphi(g)\hspace{2mm} \forall k\in KZZ^{\prime}, g\in G, i\in I(1)\}
\end{equation*}
where $S(G, Sym^{\vec{r}}\overline{\mathbb{F}}_p^2)$ be the space of locally
constant functions on $G$ with values in $Sym^{\vec{r}}\overline{\mathbb{F}}_p^2$ which are
compactly supported modulo $KZZ^{\prime}$ on the left. With a slight change to the arguments given in Proposition 14(2) and Proposition 15 in \cite{MR1290194}, we have  the elements $s_n^0$ and $\beta s_n^0$ as eigenvectors for standard $I$-action.
Note that the element $s_0^0=[\textnormal{Id}, \bigotimes_{j=0}^{wf-1}x_j^{r_j}]$ generates an irreducible $KZZ^{\prime}$-submodule (see, Remark 6 in \cite{MR3495745}) and by Lemma \ref{1-1} it is isomorphic to $\sigma$.
Next, we compute the $KZZ^{\prime}$-submodule generated by $\beta s_0^0=[\alpha,  \bigotimes_{j=0}^{wf-1}y_j^{r_j}]$ using the following coset representatives of $KZZ^{\prime}/I$, 
\begin{equation*}
    \bigg\{ZZ^{\prime}\begin{pmatrix} 
0& 1  \\ 
1& 0   
 \end{pmatrix}I\bigg\}\cup \bigg\{ZZ^{\prime}\begin{pmatrix} 
1& 0  \\ 
\lambda_0& 1   
 \end{pmatrix}I\bigg\}_{\lambda_0\in I_1}
\end{equation*}
(see, Lemma 2.3 in \cite{MR3873949}). Thus, we get
\begin{align*}
   \begin{pmatrix} 
0& 1  \\ 
1& 0   
 \end{pmatrix}[\alpha, \bigotimes_{j=0}^{wf-1}y_j^{r_j}]&= [g_{1, 0}^0, \bigotimes_{j=0}^{wf-1}x_j^{r_j}] \hspace{5pt}\text{and}\\
\begin{pmatrix} 
1& 0  \\ 
\lambda_0& 1\end{pmatrix}[\alpha, \bigotimes_{j=0}^{wf-1}y_j^{r_j}] &= [g_{1, \lambda_0^{-1}}^0, \begin{pmatrix} 
0& -\lambda_0^{-1}  \\ 
\lambda_0& \varpi_D\end{pmatrix}\bigotimes_{j=0}^{wf-1}y_j^{r_j}]\\
&=[g_{1,\lambda_0^{-1}}^0, 
\bigotimes_{j=0}^{wf-1}((-\lambda_0^{-1})^{p^j}x_j+\varpi_D^{p^j}y_j)^{r_j}]\\
&=[g_{1,\lambda_0^{-1}}^0, (-\lambda_0^{-1})^{r}
\bigotimes_{j=0}^{wf-1}x_j^{r_j}]
\end{align*}

where $\lambda_0\in \mathbb{F}_{q^w}^{\times}$ and $\alpha=g_{0, 0}^1$. We observe that there exist $q^w+1$ elements which span the
$KZZ^{\prime}$-submodule generated by $\beta s_0^0$ and all these elements are linearly independent.  

We prove that the $KZZ^{\prime}$-module generated by $\beta s_n^0$ is isomorphic to a reducible $(q^w+1)$-dimensional principal series representation by applying Frobenius reciprocity given in Theorem 5 in \cite{MR3495745}. Since $\beta s_0^0$ is an $I$-eigenvector, we obtain a map $\varphi\in \textnormal{Hom}_I(\chi_{\beta s_0^0}, \textnormal{ind}_{KZZ^{\prime}}^G\sigma_I)$ such that $\varphi(1)=[\alpha, \bigotimes_{j=0}^{wf-1}y_j^{r_j}]$, where $\chi_{\beta s_0^0}$ is the character of $I$ corresponding to $\beta s_0^0$. Using Frobenius reciprocity, we obtain a map of $K$-modules,\\ $\widetilde{\varphi} \in \textnormal{Hom}_K(\textnormal{Ind}_I^K{\chi_{\beta s_0^0}}, \textnormal{ind}_{KZZ^{\prime}}^G\sigma)$ such that $\widetilde{\varphi}([\textnormal{Id}, 1])= [\alpha, \bigotimes_{j=0}^{wf-1}y_j^{r_j}]$. By slightly modifying the argument of Lemma 2.3 in \cite{MR3195399}, we find that $\textnormal{Ind}_I^K{\chi_{\beta s_0^0}}$ is a $q^w+1$-dimensional representation of $K$. Therefore, the module generated by $\beta s_0^0$ is isomorphic to a $q^w+1$-dimensional principal series representation by dimension considerations.

Our claim follows from Proposition \ref{T} since $T$ is an injective map of $G$-modules and $T^n(s_0^0)= s_{n}^0$ and $T^n(\beta s_0^0)= \beta s_{n}^0$.\\
\item For the case $r=0$, using the equation \eqref{id} we have
\begin{equation*}
    T(s_0^0)=\sum\limits_{\lambda\in I_1}[g_{1,\lambda}^0, 1 ]+[\alpha, 1 ]= s_1^0+ \beta s_0^0.
\end{equation*}
Note that $s_0^0=[\textnormal{Id}, \bigotimes_{j=1}^{wf-1}x_j^{r_j}]$ generates a 1-dimensional $KZZ^{\prime}$-submodule isomorphic to $\sigma$, and so does it's image under $T$.
Also, we have
$T^m(s_0^0)=\sum_{s=0}^m\sum_{\mu\in I_s}[g_{s, \mu}^{1-\delta_{s, m}}, 1]$, where $\delta_{s, m}=1$ if $s=m$; and $\delta_{s, m}=0$ if $s\neq m$ (see, equation 7 in \cite{MR3195399}). Thus, $s_n^0+ \beta s_{n-1}^0= T^n(s_0^0)- T^{n-2}(s_0^0)$ for $n\geq 2$.\\
\item For $r=q^w-1$, let us first show that $s_1^0+ \beta s_0^0$ is an eigenvector for the action of cosets of $KZZ^{\prime}/I$. By the identities given in Lemme 2.7 in \cite{MR3195399} we have
\begin{align*}
    \begin{pmatrix} 
1& 0  \\ 
\lambda_0& 1   
 \end{pmatrix}(s_1^0+\beta s_0^0)=\sum_{\mu_0\in I_1}[g_{1, \mu_0}^0, (1-\lambda_0\mu_0)^r&\bigotimes_{j=0}^{wf-1}x_j^{r_j}]+[\alpha, (\lambda_0)^r\bigotimes_{j=0}^{wf-1}y_j^{r_j} ]\\
 &+[g_{1,\lambda_0^{-1}}^0, (-\lambda_0^{-1})^{r}
\bigotimes_{j=0}^{wf-1}x_j^{r_j}].
\end{align*}
Also, we have
\begin{align*}
\begin{pmatrix} 
0& 1  \\ 
1& 0  
 \end{pmatrix}(s_1^0+\beta s_0^0)=&\sum_{\mu_0\in I_1}[g_{1, \mu_0}^0, (-\mu_0)^r\bigotimes_{j=0}^{wf-1}x_j^{r_j}]+[\alpha, \bigotimes_{j=0}^{wf-1}y_j^{r_j} ]\\
 &+[g_{1,0}^0, \bigotimes_{j=0}^{wf-1}x_j^{r_j}].
\end{align*}
Now using the injectivity of $T$ and since we have $T^n(s_1^0+\beta s_0^0)= s_n^0+\beta s_{n-1}^0$, the proof is complete.
\end{enumerate}
\end{proof}
\subsection{Pro-$p$ Iwahori invariants of $\textnormal{ind}_{KZZ^{\prime}}^G\sigma/{(T)}$.}
\begin{prop}\label{invariant}
Let $s_n^k$ and $t_n^s$ be the elements of $\textnormal{ind}_{KZZ^{\prime}}^G\sigma/{(T)}$. Then we have the following.
\begin{enumerate}
    \item If $n\geq 1$ and $k\neq r$, we have $s_n^k\in \textnormal{Im}(T)$ for $0\leq k_j\leq r_j$ and $0\leq j \leq wf-1$.
    \item If $n = 1$, the element $s_1^r$ is a non trivial $I(1)$-invariant. If $n\geq 2$, then $s_n^r\in \textnormal{Im}(T)$.
    \item If $wf>1$, $n\geq 1$ and $0\leq l\leq wf-1$, then the element $s_n^{p^l(r_l+t)}$ is a non trivial $I(1)$-invariant $\mod<\{s_n^{p^l(r_l+s)}\}_{0\leq s\leq t-1}>_G$, where $0\leq t \leq p-r_l-1$.
    \item Set $n\geq 1$. If $e> 1$, $0\leq l\leq wf-1$, and $r> p^s$, then the element $t_n^s$ is a non-trivial $I(1)$-invariant. If $e=1$, then the element $t_n^s$ is non trivial $I(1)$-invariant $\mod<\{s_n^{p^{wf+s-1}m}\}_{1\leq m\leq p-1} >_G$.  
   
\end{enumerate}

\end{prop}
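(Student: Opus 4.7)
The plan is to treat the four items by two complementary techniques: produce explicit preimages under $T$ to prove membership in $\textnormal{Im}(T)$ for (1) and the $n\geq 2$ case of (2), and verify $I(1)$-invariance modulo the indicated submodule (of $\textnormal{Im}(T)$ together with ``lower'' $s_n$- or $t_n$-elements) for the $n=1$ case of (2), (3), and (4). The main ingredients are Proposition \ref{T}, Lemma \ref{modp}, Lemma \ref{witt}, and the decomposition of an $I(1)$-element into upper unipotent, lower unipotent, and diagonal factors displayed at the start of Section 3.

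For (1), given $\vec i\neq \vec r$ with $0\le i_j\le r_j$ and $i=\sum_j p^j i_j$, I would set $z=\sum_{\mu\in I_{n-1}}[g_{n-1,\mu}^{0}, \bigotimes_j x_j^{r_j-i_j}y_j^{i_j}]$ (or $[\textnormal{Id},\cdot]$ if $n=1$). Since the coefficient $c_{\vec r}$ in this weight vector is zero, the boundary term in Proposition \ref{T} vanishes, and reindexing $\mu+\varpi_D^{n-1}\lambda\mapsto \nu\in I_n$ yields $T(z)=(-1)^i s_n^i$. For $s_n^r$ with $n\geq 2$ in (2), I would take $v=\bigotimes_j y_j^{r_j}$ so that $c_{\vec r}=1$; the resulting boundary term $\sum_\mu[g_{n-2,[\mu]_{n-2}}^{0},\bigotimes_j(\mu_{n-1}^{p^j}x_j+y_j)^{r_j}]$ is expanded by the binomial theorem and summed over $\mu_{n-1}\in\mathbb{F}_{q^w}$; by Lemma \ref{modp} only the multi-indices $\vec i=\vec 0$ (contributing a factor $q^w\equiv 0$) and $\vec i=(p-1,\dots,p-1)$ (contributing $\prod_j\binom{r_j}{p-1}=0$ under the hypothesis $r_j<p-1$) survive, and both vanish. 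For $n=1$, equation \eqref{id} gives $T([\textnormal{Id},\bigotimes y_j^{r_j}])=(-1)^r s_1^r+\beta s_0^0$, so $s_1^r\equiv -(-1)^r\beta s_0^0$ modulo $T$, and invariance and non-triviality are inherited from Proposition \ref{prop9}.

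For (3), fix $l$ and set $k=p^l(r_l+t)$. I would test the three $I(1)$-generators on $s_n^k$ in the quotient. The key computation is for $\begin{pmatrix}1&b\\0&1\end{pmatrix}$: the factorization $g_{n,\mu+b}^{0}=g_{n,[\mu+b]_n}^{0}\begin{pmatrix}1&\gamma\\0&1\end{pmatrix}$ combined with $\sigma(\begin{pmatrix}1&\bar\gamma\\0&1\end{pmatrix})\bigotimes_j x_j^{r_j}=\bigotimes_j x_j^{r_j}$ gives $\sum_{\nu\in I_n}[g_{n,\nu}^0,\widetilde\mu_{n-1}^{\,k}\bigotimes x_j^{r_j}]$, where $\widetilde\mu_{n-1}$ is computed from $\nu$ and $b$ by iterating Lemma \ref{witt}. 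Using the additivity of Frobenius in characteristic $p$, write $\widetilde\mu_{n-1}^{p^l}=\nu_{n-1}^{p^l}-b_{n-1}^{p^l}+(\textnormal{lower-digit terms})^{p^l}$ and expand $(\widetilde\mu_{n-1}^{p^l})^{r_l+t}$ by the binomial theorem; the $\nu_{n-1}^{p^l(r_l+t)}$-term reproduces $s_n^k$, and every other term involves $\nu_{n-1}^{p^l a}$ with $a<r_l+t$ and coefficients polynomial in the lower digits. Terms with $a\le r_l$ are absorbed into $\textnormal{Im}(T)$ by (1) after resolving the lower-digit dependence against Lemma \ref{poly}, while those with $r_l<a<r_l+t$ reduce modulo $\textnormal{Im}(T)$ to $\langle\{s_n^{p^l(r_l+s)}\}_{s<t}\rangle_G$. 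The lower unipotent and diagonal actions are handled analogously but are considerably more direct. Non-triviality follows from identifying a distinguished coefficient in the Proposition \ref{prop9}-style basis expansion that cannot be cancelled by strictly smaller exponents.

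For (4), the parallel calculation applies to $t_n^s$. The essential new feature is that the weight vector $\bigotimes_{j\ne s}x_j^{r_j}\otimes x_s^{r_s-1}y_s$ is no longer fixed by $\sigma(\begin{pmatrix}1&\bar\gamma\\0&1\end{pmatrix})$: it acquires a factor $(\bar\gamma^{p^s}x_s+y_s)$ in slot $s$, producing a correction proportional to $\bar\gamma^{p^s}\bigotimes_j x_j^{r_j}$. The size of $\bar\gamma$ is governed by Lemma \ref{witt} via $\mu+b=[\mu+b]_n+\varpi_D^n\gamma$: when $e>1$, the Witt correction $\varpi_D^e [P_0]$ enters only past $\varpi_D^2$, so $\bar\gamma\equiv 0\pmod{\varpi_D}$ at the relevant truncation and the correction vanishes, giving pure $I(1)$-invariance of $t_n^s$ modulo $\textnormal{Im}(T)$; when $e=1$, the $\varpi_D$-order correction survives and contributes precisely the exponents $p^{wf+s-1}m$ for $1\le m\le p-1$. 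The main obstacle I foresee is this last step: iterating Lemma \ref{witt} to track the digits of $[\mu+b]_n$ through the action, matching the resulting Frobenius-twisted exponents against the claimed spans, and confirming that no stray elements escape. Once the digit accounting is pinned down, non-triviality follows from the same leading-coefficient argument used in (3).
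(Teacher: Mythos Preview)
Your plan for (1) and (2) is exactly the paper's: explicit preimages $[g_{n-1,\mu}^0,\bigotimes x_j^{r_j-i_j}y_j^{i_j}]$ and $[g_{n-1,\mu}^0,\bigotimes y_j^{r_j}]$ under $T$, with the boundary term killed by Lemma~\ref{modp}, and the $n=1$ case of (2) read off from \eqref{id}. For (3) and (4) the paper also tests the three $I(1)$-generators, but only at $n=1$; the passage to $n>1$ is not done by direct digit-tracking but by invoking Corollary~2.6 of \cite{MR3195399}, which propagates $I(1)$-invariance (modulo a $G$-stable subspace) from an element supported on $X_1^0$ to its translates $\sum_{\mu\in I_{n-1}} g_{n-1,\mu}^0 X$ supported on $X_n^0$. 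Your proposed direct computation for general $n$ is viable, but the carries from lower digits produce coefficients depending on $\nu_0,\dots,\nu_{n-2}$, and you then need to recognize each such term as $\sum_{\mu'} c(\mu') g_{n-1,\mu'}^0 s_1^{k'}$ lying in the $G$-span of $\textnormal{Im}(T)$ or of the lower $s_1^{p^l(r_l+s)}$; this recognition step is exactly what the cited corollary packages, so the paper's route is shorter.

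Two small corrections to your (4). First, the diagonal generator $\begin{pmatrix}1+\varpi_D a&0\\0&1\end{pmatrix}$ does not fix $t_1^s$: the paper computes the defect to be $a^{p^s}s_1^{p^s}$, which lies in $\textnormal{Im}(T)$ by (1) under the hypothesis $r>p^s$. Second, when $e>1$ the reduction $\bar\gamma$ is not zero; from $B(\mu_0,b)=\varpi_D^{e-1}[P_0(\mu_0,b_0)]+[b_1]+\cdots$ one has $\bar\gamma=b_1$, constant in $\mu$, so the correction is $b_1^{p^s}s_1^0\in\textnormal{Im}(T)$ rather than vanishing outright. Neither affects the conclusion since you are working modulo $\textnormal{Im}(T)$.
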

\begin{proof}
\begin{enumerate}
    \item For $n=1$ and $k \neq r$, we set, $v=\bigotimes\limits_{j=0}^{wf-1} x_j^{r_j-k_j}y_j^{k_j} $ and from the equation \eqref{id}, we have
    \begin{equation*}
        T[\textnormal{Id}, v]= \sum_{\lambda_0\in I_1}[g_{1, \lambda_0}^0,  (-\lambda_0)^k\bigotimes\limits_{j=0}^{wf-1} x_j^{r_j}]= (-1)^ks_1^k.
    \end{equation*}
    Now, since $T$ is $G$-equivariant, for $n > 1$, we have 
    \begin{equation*}
        T((-1)^k\sum_{\mu\in I_{n-1}}[g_{n-1,\mu}^0, v])=(-1)^kg_{n-1,\lambda_0}^0T[\textnormal{Id}, v]=g_{n-1,\mu}^0s_1^k= s_n^k,
    \end{equation*}
    which yields $s_n^k \in \textnormal{Im}(T)$.
   \item  Combining the formula (4) in \cite{MR2018825} and Lemma \ref{lem1}, we have the following for $n=1$,
   \begin{equation*}
       T[\textnormal{Id}, \bigotimes_{j=0}^{wf-1}y_j^{r_j}]=\sum_{\lambda_0\in I_1}[g_{1,  \lambda_0}^0,(-\lambda)^r\bigotimes_{j=0}^{wf-1}x_j^{r_j}]+[\alpha, \bigotimes_{j=0}^{wf-1}y_j^{r_j}].
   \end{equation*}
   Since $\beta$ normalizes $I(1)$, we have
   \begin{equation*}
       T[\textnormal{Id}, \bigotimes_{j=0}^{wf-1}y_j^{r_j}]= s_1^r+\beta [\textnormal{Id}, \bigotimes_{j=0}^{wf-1}x_j^{r_j}]
   \end{equation*}
   as an $I(1)$-invariant.
   Now for $n\geq 1$, by the equation \eqref{im(T)0} we have
   \begin{align*}
       T(\sum_{\mu\in I_{n-1}}[g_{n-1, \mu}^0, &\bigotimes_{j=0}^{wf-1}y_j^{r_j}])= \sum_{\mu \in I_{n}}[g_{n, \mu}^0, (-\lambda)^r\bigotimes_{j=0}^{wf-1}x_j^{r_j}]\\
       &+\sum_{\mu\in I_{n-2}}[g_{n-2, [\mu]_{n-1}}^0, \sum_{\mu_{n-2}\in I_1}\bigotimes_{j=0}^{wf-1}(\mu_{n-2}^{p_j}x_j+y_j)^{r_j} ].
   \end{align*}
By Lemma \ref{modp} we have
\begin{equation*}
    \sum_{\mu\in I_{n-2}}[g_{n-2, [\mu]_{n-1}}^0, \sum_{\mu_{n-2}\in I_1}\bigotimes_{j=0}^{wf-1}(\mu_{n-2}^{p_j}x_j+y_j)^{r_j} ]=0.
\end{equation*}
Therefore, we have $\sum_{\mu \in I_{n}}[g_{n, \mu}^0, (-\lambda)^r\bigotimes_{j=0}^{wf-1}x_j^{r_j}]=s_n^r\in \textnormal{Im} (T)$.
   
 \item Let us first consider the case $n=1$ and $0\leq t\leq p-r_l-1$. We have
 \begin{equation*}\begin{pmatrix} 
1& b  \\ 
0& 1   
 \end{pmatrix}s_1^{p^l(r^l+t)}-s_1^{p^l(r_l+t)}= \begin{pmatrix} 
1& b  \\ 
0& 1   
 \end{pmatrix}\sum_{\mu\in I_1}[g_{1, \mu}^0, (\mu_0)^{p^l(r_l+t)}\bigotimes_{j=0}^{wf-1}x_j^{r_j}]-s_1^{p^l(r_l+t)}.\\
\end{equation*}
By Lemma \ref{witt}, we have
\begin{equation}\label{id1}
 \begin{pmatrix} 
1& b  \\ 
0& 1   
 \end{pmatrix}\begin{pmatrix} 
\varpi_D & \mu  \\ 
0& 1   
 \end{pmatrix} =\begin{pmatrix} 
\varpi_D & [\mu_0+b_0]  \\ 
0& 1   
 \end{pmatrix}  \begin{pmatrix} 
1& B(\mu_0,b)  \\ 
0& 1   
 \end{pmatrix},
\end{equation}
 where 
 \begin{equation*}
     B(\mu_0, b)=\varpi_D^{e-1}[P_0(\mu_0, b_0)]+[b_1]+[b_2]\varpi_D]+\cdots.
 \end{equation*}
 By making the substitution $\mu_0\rightarrow \mu_0-b_0$, we get
 \begin{align*}
  \begin{pmatrix} 
1& b  \\ 
0& 1   
 \end{pmatrix}s_1^{p^l(r^l+t)}-&s_1^{p^l(r_l+t)}=\sum_{\mu\in I_1}[g_{1, \mu}^0,  ((\mu_0-b_0)^{p^l(r_l+t)}-(\mu_0)^{p^l(r_l+t)})\bigotimes_{j=0}^{wf-1}x_j^{r_j}] \\
 &=\sum_{\mu\in I_1}[g_{1, \mu}^0, \sum_{s_l=0}^{r_l+t}(-b_0)^{p_l(r_l+t-s_l)}(\mu_0)^{p^ls_l}\binom{r_l+t}{s_l}\bigotimes_{j=0}^{wf-1}x_j^{r_j}]\\
 &=\sum_{s_l=0}^{r_l+t}(-b_0)^{p_l(r_l+t-s_l)}\binom{r_l+t}{s_l}s_1^{p_ls_l}.
 \end{align*}
 It then follows that
 \begin{equation*}
\begin{pmatrix} 
1& b  \\ 
0& 1   
\end{pmatrix}s_1^{p^l(r^l+t)}= s_1^{p^l(r_l+t)}\mod<\{s_n^{p^l(r_l+s)}\}_{0\leq s\leq t-1}>_G.
 \end{equation*}
 Note the invariant of $s_1^{p^l(r_l+t)}$   under 
 \begin{equation*}
     \begin{pmatrix} 
1& 0  \\ 
\varpi_Dc& 1
 \end{pmatrix}\hspace{5pt}\text{and}\hspace{5pt} \begin{pmatrix} 
1+\varpi_Da& 0  \\ 
0& 1
 \end{pmatrix}
 \end{equation*}
 is trivial. The statement for $n > 1$ follows from Corollary 2.6 in \cite{MR3195399} by identifying $X=s_1^{p^l(r_l+t)}$.  
 \item When $n=1$ by the equation \eqref{id1} and making the substitution $\mu_0 \rightarrow \mu_0-b_0$ we get
 \begin{align}
     \begin{pmatrix} 
1& b  \\ 
0& 1   
 \end{pmatrix}t_1^s-t_1^s&=\sum_{\mu\in I_1}[g_{1, \mu}^0, \begin{pmatrix} 
1& B(\mu_0-b_0, b)  \\ 
0& 1   
 \end{pmatrix}\bigotimes\limits_{s\neq j=0}^{wf-1}x_j^{r_j}\otimes x_s^{r_s-1}y_s]-t_1^s\nonumber\\
 &=\sum_{\mu\in I_1}[g_{1, \mu}^0, \bigotimes\limits_{s\neq j=0}^{wf-1}x_j^{r_j}\otimes x_s^{r_s-1}(B(\mu_0-b_0,  b)^{p^s}x_s+y_s)]-t_1^s.\label{rhs of 4}
\end{align}
 Now, we recall that $B(\mu_0, b_0)=\varpi_D^{e-1}[P_0(\mu_0, b_0)]+[b_1]+[b_2]\varpi_D+\cdots$. Thus, using the Kronecker delta function $\delta_{e,1}$, we can write \eqref{rhs of 4} as
 \begin{align*}
      R.H.S&=\sum_{\mu\in I_1}[g_{1, \mu}^0, \bigotimes\limits_{s\neq j=0}^{wf-1}x_j^{r_j}\otimes x_s^{r_s-1}(\delta_{e, 1}P_0(\mu_0-b_0, b_0)+b_1)^{p^s}x_s+y_s)]\\
 &=\sum_{\mu\in I_1}[g_{1, \mu}^0, (\delta_{e, 1}P_0(\mu_0-b_0, b_0))^{p^s} \bigotimes_{j=0}^{wf-1}x_j^{r_j}]+[g_{1, \mu}^0, (b_1)^{p^s}\bigotimes_{j=0}^{wf-1}x_j^{r_j}].\\
 \end{align*}
 When $e=1$, by the equation \eqref{P_0} we note that the degree of $\mu_0$ appearing in $P_0(\mu_0-b_0, b)$ are $\{p^{wf+s-1}\}_{0\leq m\leq p-1}$.\\
Similarly, we have
\begin{align*}
     \begin{pmatrix} 
1& 0  \\ 
\varpi_Dc& 1   
 \end{pmatrix}t_1^s-t_1^s&=\begin{pmatrix} 
1& 0  \\ 
\varpi_Dc& 1   
 \end{pmatrix}\sum_{\mu\in I_1}[g_{1, \mu}^0,\bigotimes\limits_{s\neq j=0}^{wf-1}x_j^{r_j}\otimes x_s^{r_s-1}y_s]-t_1^s.
\end{align*}
Note that
\begin{equation*}
     \begin{pmatrix} 
1& 0  \\ 
\varpi_Dc& 1   
 \end{pmatrix}
 \begin{pmatrix} 
\varpi_D & \mu  \\ 
0 & 1   
 \end{pmatrix}= \begin{pmatrix} 
\varpi_D& \mu  \\ 
0 & 1   
 \end{pmatrix} \begin{pmatrix} 
1-\varpi_D^{-1}\mu\varpi_Dc\varpi_D& -\varpi_D^{-1}\mu\varpi_Dc\mu  \\ 
\varpi_Dc\varpi_D & \varpi_Dc\mu+1   
 \end{pmatrix}
\end{equation*}
 and the very right matrix acts trivially on $\bigotimes_{j=0}^{wf-1}x_j^{r_j}$. It thus follows that
 \begin{align*}
   \begin{pmatrix} 
1& 0  \\ 
\varpi_Dc& 1   
 \end{pmatrix}t_1^s-t_1^s&=  0.\\
 \end{align*}
 Finally, we have
 \begin{equation*}
\begin{pmatrix} 
\varpi_Da+1 & 0  \\ 
0& 1   
\end{pmatrix}t_1^s-t_1^s=\begin{pmatrix} 
\varpi_Da+1 & 0  \\ 
0& 1   
 \end{pmatrix}\sum_{\mu\in I_1}[g_{1, \mu}^0,\bigotimes\limits_{s\neq j=0}^{wf-1}x_j^{r_j}\otimes x_s^{r_s-1}y_s]-t_1^s.
 \end{equation*}
 Note that
 \begin{equation*}
        \begin{pmatrix} 
\varpi_Da+1 & 0  \\ 
0& 1   
 \end{pmatrix}
 \begin{pmatrix} 
\varpi_D & \mu  \\ 
0 & 1   
 \end{pmatrix}= \begin{pmatrix} 
\varpi_D& \mu  \\ 
0 & 1   
 \end{pmatrix} \begin{pmatrix} 
a\varpi_D+1& a\mu  \\ 
0 & 1   
 \end{pmatrix}. 
 \end{equation*}
 Therefore, we have
 \begin{align*}
  \begin{pmatrix} 
\varpi_Da+1 & 0  \\ 
0& 1   
\end{pmatrix}t_1^s-t_1^s=  \sum_{\mu\in I_1}[g_{1, \mu}^0, (a\mu)^{p^s}\bigotimes\limits_{j=0}^{wf-1}x_j^{r_j}]= a^{p^s}s_1^{p^s}. 
 \end{align*}
 
 With all the cases above, we see that if $r> p^s$ and $e > 1$, the element $t_1^s$ is an $I(1)$-invariant. If $e=1$, then the element $t_1^s$ is non trivial $I(1)$-invariant $\mod<\{s_n^{p^{wf+s-1}m}\}_{1\leq m\leq p-1} >_G$. This completes the proof.
\end{enumerate}
\end{proof}
\begin{lem}\label{I}
Let $\begin{pmatrix} 
a & b  \\ 
c& d   
\end{pmatrix}\in I$ be given. Let the elements $s_n^k$ and $t_n^s$ be $I(1)$-invariants as in Proposition \ref{invariant}. Then they are $I$-eigenvectors and those actions are given by,
\begin{enumerate}
    \item $\begin{pmatrix} 
a & b  \\ 
c& d   
\end{pmatrix} s_n^k= a^{k+1}d^{-k} s_n^k$ ; and
\item $\begin{pmatrix} 
a & b  \\ 
c& d   
\end{pmatrix} t_n^s= a^{1-p^s}d^{p^s}t_n^s$.
\end{enumerate}
\end{lem}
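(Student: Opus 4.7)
The plan is to exploit the $I(1)$-invariance of $s_n^k$ and $t_n^s$ already established in Proposition~\ref{invariant} (possibly only modulo an appropriate $G$-submodule). Since $I(1)$ is normal in $I$ with abelian quotient $I/I(1)\cong k_D^\times\times k_D^\times$ realised by $\begin{pmatrix}a&b\\c&d\end{pmatrix}\mapsto(\bar a,\bar d)$, the action of an arbitrary element of $I$ on an $I(1)$-invariant vector coincides with that of its diagonal Teichm\"uller representative $\mathrm{diag}([\bar a],[\bar d])$. Thus the lemma reduces to computing the scalar by which such a diagonal matrix acts on $s_n^k$ and on $t_n^s$.

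First I would perform the matrix computation
\[
\mathrm{diag}([\bar a],[\bar d])\cdot g_{n,\mu}^0=g_{n,\mu'}^0\cdot k_0,
\]
solving simultaneously for $\mu'\in I_n$ and $k_0\in K$. Using the division-algebra commutation rule $\varpi_D[\bar x]=[\bar x^{q}]\varpi_D$ to transport Teichm\"uller units past $\varpi_D^n$, the choice $\mu'=[\bar a]\mu[\bar d]^{-1}$ lies in $I_n$ with digits $\mu'_i=\bar a\mu_i\bar d^{-q^i}$, and $k_0$ emerges as the diagonal matrix $\mathrm{diag}(\varpi_D^{-n}[\bar a]\varpi_D^n,[\bar d])$, whose image in $GL_2(k_D)$ is diagonal with a Frobenius-twisted entry in the upper left.

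Next I would apply \eqref{weight} to evaluate $\sigma(k_0)$ on the vector in each summand. For $s_n^k$, the coefficient $\mu_{n-1}^k$ is a scalar untouched by $\sigma(k_0)$, which scales $\bigotimes_j x_j^{r_j}$ by a monomial in $\bar a$; after substituting $\mu_{n-1}$ in terms of $\mu'_{n-1}$ using $\mu'_{n-1}=\bar a\mu_{n-1}\bar d^{-q^{n-1}}$ and reindexing the sum $\mu\mapsto\mu'$, one recovers $s_n^k$ multiplied by the character $\bar a^{k+1}\bar d^{-k}$. For $t_n^s$ the vector $\bigotimes_{j\ne s}x_j^{r_j}\otimes x_s^{r_s-1}y_s$ is independent of $\mu$, so $\sigma(k_0)$ contributes a single scalar and the analogous reindexing produces the character $\bar a^{1-p^s}\bar d^{p^s}$.

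The main technical obstacle will be the Frobenius twist introduced by conjugating $[\bar a]$ past $\varpi_D^n$: it must combine consistently with the $n$-dependence appearing in the reindexing and with reductions modulo $q^w-1$ in $k_D^\times$ so that the final eigenvalues come out $n$-independent. In the situations of Proposition~\ref{invariant}~(1), (3), (4) where $I(1)$-invariance only holds modulo a $G$-submodule, the same computation will be carried out in the corresponding quotient and will yield $I$-eigenvector statements there.
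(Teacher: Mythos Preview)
Your proposal is correct and follows essentially the same approach as the paper: reduce to a diagonal Teichm\"uller representative via the quotient $I/I(1)\cong k_D^\times\times k_D^\times$, factor its left action on $g_{n,\mu}^0$ as $g_{n,\mu'}^0\cdot k_0$ with $k_0\in K$, apply $\sigma(k_0)$ to the weight vector, and reindex the sum over $\mu$. You are in fact more careful than the paper about the non-commutativity of $D$ (the Frobenius twist $\varpi_D[\bar x]=[\bar x^{q}]\varpi_D$ and the resulting $n$-dependence), which the paper's short computation suppresses by writing the factorisation as if the entries commuted.
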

\begin{proof}
We have that
\begin{equation*}
    I/I(1)=\Big\{\begin{pmatrix} 
a & 0  \\ 
0& d   
\end{pmatrix} |\hspace{5pt} a, d\in \mathbb{F}_{q^w}^{\times}  \Big\}.
\end{equation*}
It then follows that any elements in $I$ and 
$\begin{pmatrix} 
a & 0  \\ 
0& d   
\end{pmatrix}$
have the same action on $I(1)$-invariants. Thus, we have
\begin{align*}
    \begin{pmatrix} 
a & 0  \\ 
0& d   
\end{pmatrix}s_n^k &= a\begin{pmatrix} 
1 & 0  \\ 
0& a^{-1}d   
\end{pmatrix}\sum_{\mu\in I_n}[g_{n,\mu}^0,  \mu_{n-1}^k\bigotimes_{j=0}^{wf-1}x_j^{r^j}]\\
&=a\sum_{\mu\in I_n}[\begin{pmatrix} 
\varpi_D^n & \mu da^{-1}  \\ 
0& 1  
\end{pmatrix}, \begin{pmatrix} 
1 & 0  \\ 
0& ad^{-1}   
\end{pmatrix}\mu_{n-1}^k\bigotimes_{j=0}^{wf-1}x_j^{r^j}]\\
&=a^{k+1}d^{-k} s_n^k.
\end{align*}
Similarly, we get the $I$-action on the element $t_n^s$ as follows,
\begin{align*}
  \begin{pmatrix} 
a & 0  \\ 
0& d   
\end{pmatrix}t_n^s &= a\begin{pmatrix} 
1 & 0  \\ 
0& a^{-1}d   
\end{pmatrix}\sum\limits_{\mu\in I_n}[g_{n, \mu}^0,  \bigotimes\limits_{s\neq j=0}^{wf-1}x_j^{r_j}\otimes x_s^{r_s-1}y_s]\\  
&=a^{1-p^s}d^{p^s}t_n^s.
\end{align*}
This completes the proof.
\end{proof}

\section{Main Theorem} 
In this section, we provide a full description of the standard basis of $I(1)$-invariants $(\textnormal{ind}_{KZZ^{\prime}}^G\sigma/(T))^{I(1)}$, as stated in Theorem \ref{main}. We shall prove the main theorem by means of several Lemmas.

\subsection{Three supporting Lemmas.}
\begin{lem}\label{lem7}
Let $n\in\mathbb{Z}_{\geq 0}$ be given and $\widetilde{f}$ be an element of $\textnormal{ind}_{KZZ^{\prime}}^G\sigma$, such that $g\widetilde{f}-\widetilde{f}\in T(\textnormal{ind}_{KZZ^{\prime}}^G\sigma)+B_{n-1} \forall g\in I(1)$. For $f^{\prime}\in B_{n-1}$, we set $\widetilde{f}=\widetilde{f}_n+f^{\prime}$, where $\widetilde{f}_n\in X_n$. Further subdivide $\widetilde{f}_n=\widetilde{f}_n^0+\widetilde{f}_n^1$, where $\widetilde{f}_n^i\in X_n^i$. Then we have

\begin{align*}
    \widetilde{f}_n^0= \sum_{\mu\in I_n}[g_{n,\mu}^0, c(\mu)\bigotimes_{j=0}^{wf-1}x_j^{r_j} +\sum\limits_{k=0}^{wf-1}d_k(\mu)\bigotimes_{k\neq j=0}^{wf-1}x_j^{r_j}\otimes x_k^{r_k-1}y_k],\\
    \widetilde{f}_n^1= \sum_{\mu\in I_n}[g_{n,\mu}^1, c^{\prime}(\mu)\bigotimes_{j=0}^{wf-1}y_j^{r_j} +\sum\limits_{k=0}^{wf-1}d^{\prime}_k(\mu)\bigotimes_{k\neq j=0}^{wf-1}y_j^{r_j}\otimes y_k^{r_k-1}x_k],\\
\end{align*}
where $c(\mu), c^\prime(\mu), d(\mu), d^\prime(\mu)$ are identified at $\mu$ of polynomials  
$c, c^\prime, d$ and $d^\prime$ in $\overline{\mathbb{F}}_p[z_0,\cdots,z_{n-1}]$ of degree not greater than $q^w-1$ in each variable $z_j$. 

\end{lem}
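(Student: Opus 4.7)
The plan is to apply each of the three generators of $I(1)$—the upper unipotent $\begin{pmatrix} 1 & b \\ 0 & 1 \end{pmatrix}$, the lower unipotent $\begin{pmatrix} 1 & 0 \\ \varpi_D c & 1 \end{pmatrix}$, and the diagonal $\begin{pmatrix} 1+\varpi_D a & 0 \\ 0 & 1 \end{pmatrix}$—to $\widetilde{f}_n$ and extract polynomial constraints on the weight-coefficient functions. Start by writing $\widetilde{f}_n^0 = \sum_{\mu \in I_n}[g_{n,\mu}^0, v(\mu)]$ with $v(\mu) = \sum_{\vec{i}=\vec{0}}^{\vec{r}} c_{\vec{i}}(\mu)\bigotimes_{j=0}^{wf-1} x_j^{r_j-i_j}y_j^{i_j}$; by Lemma \ref{poly} each $c_{\vec{i}}$ is a polynomial in $\mu_0,\ldots,\mu_{n-1}$ of degree at most $q^w-1$ per variable, and similarly express $\widetilde{f}_n^1$ on the $g_{n,\mu}^1$ side. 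The target is to prove $c_{\vec{i}}(\mu) = 0$ unless $\vec{i}=\vec{0}$ or $\vec{i}=\vec{e_k}$ for some $0\leq k\leq wf-1$, so that the surviving coefficients can be renamed $c(\mu)$ and $d_k(\mu)$ as in the statement.

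Since $I(1)\subset I$ preserves the double coset decomposition $X_n = X_n^0\sqcup X_n^1$, we have $g\widetilde{f}_n\in X_n$ for $g\in I(1)$, and the hypothesis forces the $X_n$-projection of $g\widetilde{f}_n-\widetilde{f}_n$ to lie in the $X_n$-part of $T(\textnormal{ind}_{KZZ^{\prime}}^G\sigma)$ modulo $B_{n-1}$. By Proposition \ref{T}, this image modulo $B_{n-1}$ is spanned on the $X_n^0$-side by two kinds of terms: those coming from $T(X_{n-1}^0)$, which contribute only the pure weight $\bigotimes x_j^{r_j}$; and those from $T(X_{n+1}^0)$, which contribute elements of the form $c_{\vec{r}}(\mu+\varpi_D^n[\lambda])\bigotimes(\lambda^{p^j}x_j+y_j)^{r_j}$ summed over $\lambda\in I_1$. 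This rigidly constrains which weight components $\vec{i}$ can be absorbed.

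For the upper unipotent, factor $\begin{pmatrix} 1 & b \\ 0 & 1 \end{pmatrix} g_{n,\mu}^0 = g_{n, \mu'}^0 \cdot k_{b,\mu}$ using Lemma \ref{witt}, extending \eqref{id1} to arbitrary $n$. After the substitution $\mu\mapsto\mu-b$ on coset representatives, the invariance condition becomes a polynomial identity comparing $\sigma(k_{b,\mu})v(\mu-b)$ with $v(\mu)$; the right-hand side must match the permitted $T$-image. The lower unipotent is treated analogously. For the diagonal, the factorization $\begin{pmatrix} 1+\varpi_D a & 0 \\ 0 & 1 \end{pmatrix} g_{n,\mu}^0 = g_{n,\mu'}^0\cdot k$ yields a $k\in K$ whose $\sigma$-action scales $\bigotimes x_j^{r_j-i_j}y_j^{i_j}$ by a factor depending on $\vec{i}$, effectively separating weight components. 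Combining these constraints and matching against the restrictive $T$-image, one deduces $c_{\vec{i}}(\mu)\equiv 0$ outside $\{\vec{0}\}\cup\{\vec{e_k}\}_{0\leq k\leq wf-1}$. The analogous statement for $\widetilde{f}_n^1$ follows via the identity $\beta g_{n,\mu}^0 = g_{n,\mu}^1 w$, since conjugation by $w$ swaps the roles of $x_j$ and $y_j$ in the weight expansion.

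The main obstacle will be the combinatorial bookkeeping in the last step: the $T(X_{n+1}^0)$-image expands via $\bigotimes(\lambda^{p^j}x_j+y_j)^{r_j}=\sum_{\vec{i}}\prod\binom{r_j}{i_j}\lambda^{p^j(r_j-i_j)}\bigotimes x_j^{r_j-i_j}y_j^{i_j}$, so absorbing a weight component with index $\vec{i}$ into the $T$-image requires identifying admissible $\lambda$-exponents via Lemma \ref{modp}. Ruling out non-$\vec{e_k}$ components amounts to verifying that the polynomial identities produced by the three unipotent-diagonal actions cannot be simultaneously satisfied by any allowable $c_{\vec{r}}$; this hinges on the non-vanishing of the binomial factors $\binom{r_j}{i_j}$, which is supplied by Lucas's theorem (Theorem \ref{lucus}) under the paper's running hypothesis $2<r_j<p-3$.
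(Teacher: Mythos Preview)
Your outline is plausible but takes a considerably harder road than the paper, and the step you flag as ``the main obstacle'' is exactly where your argument remains a sketch rather than a proof. The paper avoids that obstacle entirely by a different choice of test element: instead of applying generic generators $\begin{pmatrix}1&b\\0&1\end{pmatrix}$, $\begin{pmatrix}1&0\\\varpi_D c&1\end{pmatrix}$, $\begin{pmatrix}1+\varpi_D a&0\\0&1\end{pmatrix}$, it applies the single element $\begin{pmatrix}1&\varpi_D^n\\0&1\end{pmatrix}\in I(1)$. This element satisfies $\begin{pmatrix}1&\varpi_D^n\\0&1\end{pmatrix}g_{n,\mu}^0=g_{n,\mu}^0\begin{pmatrix}1&1\\0&1\end{pmatrix}$, so it fixes every coset representative and acts purely on the weight vector $v_\mu$. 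The invariance hypothesis then reduces, via the description of $\textnormal{Im}(T)$ in Proposition~\ref{T}, to the single constraint $\bigl(\sigma\bigl(\begin{smallmatrix}1&1\\0&1\end{smallmatrix}\bigr)-1\bigr)v_\mu\in\overline{\mathbb{F}}_p\bigotimes_j x_j^{r_j}$ for each $\mu$. The paper then settles this weight-space condition by a directed-graph argument on the lattice box $\prod_j\{0,\dots,r_j\}$: the unipotent operator propagates nonzero coefficients along edges $\vec{i}\to\vec{i}-e_k$, so if the image is supported at the single vertex $\vec{0}$, the source must be supported on $\{\vec{0}\}\cup\{e_k\}_k$. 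This uses neither Lucas's theorem nor the hypothesis $2<r_j<p-3$; those enter only in later lemmas.

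By contrast, your plan of tracking generic $b$ forces you to change coset representatives and then match against the full $X_n^0$-component of $\textnormal{Im}(T)$, including the contributions $\sum_\lambda c_{\vec{r}}(\mu+\varpi_D^n[\lambda])\bigotimes_j(\lambda^{p^j}x_j+y_j)^{r_j}$ from $T(X_{n+1}^0)$. Those terms hit \emph{every} weight component $\vec{i}$, so the matching you describe is not ``restrictive'' on its face; extracting the conclusion would require a genuine combinatorial argument that you have not supplied. The paper's choice of $b=\varpi_D^n$ is precisely the device that sidesteps this difficulty.
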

\begin{proof}

We are largely utilizing the methods presented in Lemma 3.11 of \cite{MR3873949}. Notice that $\beta^{-1}$ normalizes $I(1)$. It suffices to show the equality for $\widetilde{f}_n^0$ and $n\geq0$ since the equality for $\widetilde{f}_n^1$ can be deduced by applying $\beta^{-1}\widetilde{f}_n^1$. Set $\widetilde{f}_n^0=\sum\limits_{\mu \in I_n}[g_{n,\mu}^0, v_{\mu}]$, where $v_{\mu}=\sum\limits_{\vec{i}=0}^{\vec{r} }c_{\vec{i}}\bigotimes\limits_{j=0}^{wf-1} x_j^{r_j-i_j}y_j^{i_j}$. We consider the following,
\begin{align*}
\begin{pmatrix} 
1& \varpi_D^n  \\ 
0& 1   
 \end{pmatrix}
 \widetilde{f}_n^0-\widetilde{f}_n^0  
& = \sum\limits_{\mu \in I_n}[\begin{pmatrix} 1& \varpi_D^n  \\ 0& 1   \end{pmatrix}g_{n,\mu}^0, v_{\mu}]-\sum\limits_{\mu \in I_n}[g_{n,0}^0, v_{\mu}]. \nonumber
\end{align*}
Note that
\begin{equation*}
  \begin{pmatrix} 1& \varpi_D^n  \\ 0& 1   \end{pmatrix}\begin{pmatrix} \varpi_D^n & \mu  \\ 0& 1   \end{pmatrix}= \begin{pmatrix} \varpi_D^n & \mu  \\ 0& 1   \end{pmatrix} \begin{pmatrix} 1& 1  \\ 0& 1   \end{pmatrix}.
\end{equation*}
Therefore, we have
\begin{align*}
 \begin{pmatrix} 
1& \varpi_D^n  \\ 
0& 1   
 \end{pmatrix}
 \widetilde{f}_n^0-\widetilde{f}_n^0 & = \sum\limits_{\mu \in I_n}[\begin{pmatrix} \varpi_D^n & \mu  \\ 0& 1   \end{pmatrix}, \begin{pmatrix} 1& 1  \\ 0& 1   \end{pmatrix}v_{\mu}]-\sum\limits_{\mu \in I_n}[g_{n,0}^0,v_{\mu}]\nonumber\\
& = \sum\limits_{\mu \in I_n}[g_{n, \mu}^0, \begin{pmatrix} 1& 1  \\ 0& 1   \end{pmatrix}v_{\mu}-v_{\mu}].\nonumber 
\end{align*}
It follows from \eqref{im(T)0} that
\begin{equation}\label{Amu}
\begin{pmatrix} 1& 1  \\ 0& 1   \end{pmatrix}v_{\mu}-v_{\mu}= \sum\limits_{\vec{i}=0}^{\vec{r} }c_{\vec{i}}
\bigotimes\limits_{j=0}^{wf-1} x_j^{r_j-i_j}(x_j+y_j)^{i_j}-v_{\mu}
\in \overline{\mathbb{F}}_p\bigotimes\limits_{j=0}^{wf-1} x_j^{r_j}.
\end{equation}
For the rest of the proof we use the model of a directed graph
following Lemma 3.11 in \cite{MR3873949}. Let $\{e_k\}_{k=0}^{wf-1}$ be the standard basis of $\mathbb{R}^{wf}$ and $X=(X_e, X_v)$ be the directed graph where
\begin{align*}
   X_e & =\{\sum\limits_{k=0}^{wf-1}a_ke_k : 0\leq a_k\leq r_k\}\cap \mathbb{Z}^{wf},\\
   X_v & = \{(\vec{i}, \vec{j}); \vec{i}=\vec{j}+e_k\hspace{4pt} \text{for}\hspace{4pt} \text{some}\hspace{4pt} k, 0\leq k\leq wf-1\}.
\end{align*}
 With the above model, we observe that every $v\in Sym^{\vec{r}}\overline{\mathbb{F}}^2_p$ are functions on $X_e$ taking values in $\overline{\mathbb{F}}_p$, where $\vec{i}$ corresponding to $\bigotimes\limits_{j=0}^{wf-1} x_j^{i_j}y_j^{i_j-r_j}$. In this interpretation, functions corresponding to $\overline{\mathbb{F}}_p\bigotimes\limits_{j=0}^{wf-1}x_j^{r_j}$ are supported on $\{\vec{r}\}$.
 
For a given element $\mathcal{V} \in \eqref{Amu}$, nonzero values of $v_{\mu}$ on a vertex $\vec{i}$ can only contribute to values of $\mathcal{V}$ on another vertex ${\vec{j}}$ if there is a non trivial directed path from $\vec{i}$ to ${\vec{j}}$ satisfying $j_k\leq i_k$ for all $k$ and $j_{k_0}< i_{k_0}$ for some $k_0$. Moreover, for ${\vec{i}}+e_k={\vec{r}}$, the value of $\mathcal{V}$ on ${\vec{i}}$ depends only on the value of $v_{\mu}$ on ${\vec{r}}$. Thus, $v_{\mu}$ can only take nonzero values on $\{\vec{0}\}\cup \{e_k\}_{k=0}^{wf-1}$, and so $c_0(\mu)$ and $\{c_{e_k}(\mu)\}_{k=0}^{wf-1}$ are functions from $I_n$ to $\overline{\mathbb{F}}_p$. By Lemma \ref{poly}, these functions can be represented by polynomials of degree at most $q^w-1$ for all $\mu \in I_n$.

\end{proof}
\begin{lem}\label{lem9}
Let $n\in\mathbb{Z}_{\geq 0}$ and $\widetilde{f}$ be an element of $\textnormal{ind}_{KZZ^{\prime}}^G\sigma$ such that $g\widetilde{f}-\widetilde{f}\in T(\textnormal{ind}_{KZZ^{\prime}}^G\sigma)+B_{n-1} \forall g\in I(1)$. For $f^{\prime}\in B_{n-1}$, we set $\widetilde{f}=\widetilde{f}_n^0+\widetilde{f}_n^1+f^{\prime}$ where support of $\widetilde{f}_n^0$ is $X_n^0$ and support of $\widetilde{f}_n^1$ is $X_n^1$. Then $d_k$ and $d^{\prime}_k$ are constant for all $0\leq k\leq wf-1.$ If $e=1$, then $d_k=d^{\prime}_k=0.$
\end{lem}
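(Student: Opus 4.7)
The plan is to test $\widetilde{f}$ against a well-chosen family of elements in $I(1)$, project onto the $X_n^0$-component in the $\overline{\mathbb{F}}_p\bigotimes_j x_j^{r_j}$ isotypic line, and match against the structure of $T(\textnormal{ind}_{KZZ^{\prime}}^G\sigma)+B_{n-1}$ described by Proposition \ref{T}. Since $\beta^{-1}$ normalizes $I(1)$ and interchanges $X_n^0\leftrightarrow X_n^1$, it suffices, as in the proof of Lemma \ref{lem7}, to treat the $d_k$; the statements about $d_k^\prime$ then follow by conjugating with $\beta^{-1}$. A key preliminary observation is that if $h\in\textnormal{ind}_{KZZ^{\prime}}^G\sigma$ has $T(h)$ supported in $\bigcup_{m\leq n}X_m$ then $h\in B_{n-1}$: examining $T(h)$ at the highest level $m_0$ of $h$, Proposition \ref{T} gives $T(h)|_{X_{m_0+1}}=0$, which together with the injectivity of $\vec{i}\mapsto\sum_j p^j i_j$ on $\prod_j\{0,\ldots,r_j\}$ forces $h_{m_0}=0$. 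Consequently, the $X_n^0$-component of $T(\textnormal{ind}_{KZZ^{\prime}}^G\sigma)+B_{n-1}$ in the $\bigotimes_j x_j^{r_j}$-direction consists of sums $\sum_{\mu\in I_n}[g_{n,\mu}^0,P_\mu\bigotimes_j x_j^{r_j}]$ whose coefficient $P_\mu$, viewed as a polynomial in $\mu_{n-1}$, has degree at most $r$ and shape determined by $[\mu]_{n-1}$.

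For the constancy of $d_k$, I would apply $g_\lambda=\begin{pmatrix}1 & \varpi_D^n[\lambda]\\ 0 & 1\end{pmatrix}\in I(1)$ for $\lambda\in k_D$. The identity $g_\lambda\, g_{n,\mu}^0=g_{n,\mu}^0\begin{pmatrix}1 & [\lambda]\\ 0 & 1\end{pmatrix}$ preserves the representative $\mu$, while $\begin{pmatrix}1 & [\lambda]\\ 0 & 1\end{pmatrix}$ fixes $\bigotimes_j x_j^{r_j}$ and sends each cross-vector $\bigotimes_{j\neq k}x_j^{r_j}\otimes x_k^{r_k-1}y_k$ to itself plus $[\lambda]^{p^k}\bigotimes_j x_j^{r_j}$, giving
\begin{equation*}
g_\lambda\widetilde{f}_n^0-\widetilde{f}_n^0=\sum_{\mu\in I_n}\Bigl[g_{n,\mu}^0,\Bigl(\sum_{k=0}^{wf-1}d_k(\mu)[\lambda]^{p^k}\Bigr)\bigotimes_{j=0}^{wf-1}x_j^{r_j}\Bigr].
\end{equation*}
By the preliminary observation the inner coefficient must fit the polynomial shape described above. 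Varying $\lambda\in k_D^\times$ and inverting the Moore-type matrix with rows $\{[\lambda_s]^{p^k}\}$ separates the distinct Frobenius exponents $p^k$, isolating each $d_k(\mu)$; then testing against the remaining generators $\begin{pmatrix}1 & 0\\ \varpi_D c & 1\end{pmatrix}$ and $\begin{pmatrix}1+\varpi_D a & 0\\ 0 & 1\end{pmatrix}$ of $I(1)$ and invoking Lemmas \ref{poly} and \ref{modp} eliminates the dependence of $d_k(\mu)$ on each digit $\mu_i$, leaving $d_k$ constant.

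For the vanishing when $e=1$, I would take $g=\begin{pmatrix}1 & [b_0]\\ 0 & 1\end{pmatrix}\in I(1)$ with $b_0\in k_D^\times$. Then $g\,g_{n,\mu}^0=g_{n,\mu^\prime}^0\begin{pmatrix}1 & B(\mu,b_0)\\ 0 & 1\end{pmatrix}$ with $\mu^\prime\in I_n$ the Witt reduction of $\mu+[b_0]$, and by Lemma \ref{witt} one has $B(\mu,b_0)\equiv P_0(\mu_{n-1},b_0)\pmod{\varpi_D}$ precisely when $e=1$. With $d_k$ now known to be constant, the same cross-term computation produces at $g_{n,\mu^\prime}^0$ an extra contribution $\sum_k d_k\,P_0(\psi(\mu^\prime)_{n-1},b_0)^{p^k}$ to the coefficient of $\bigotimes_j x_j^{r_j}$, which must still fit the admissible polynomial shape. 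Comparing monomials in $\mu^\prime_{n-1}$ via Lucas's theorem — the exponents $p^k\cdot(\text{exponents of }P_0)$ produce $\mu^\prime_{n-1}$-monomials outside the admissible set — and running the Vandermonde argument over $\{p^k\}$ as $b_0$ varies forces $d_k=0$ for every $k$. The principal technical obstacle is the sharp description of $T(\textnormal{ind}_{KZZ^{\prime}}^G\sigma)+B_{n-1}$ on $X_n^0$ in the $\bigotimes_j x_j^{r_j}$-isotypic direction: since $T$ sends both $X_{n-1}$ and $X_{n+1}$ into $X_n$, without the preliminary observation one cannot pin down which coefficients are achievable, and hence cannot isolate the obstruction contributed by a nonzero $d_k$.
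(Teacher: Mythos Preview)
Your proposal has two genuine gaps, one in each half of the argument.

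\textbf{Constancy of $d_k$.} The element $g_\lambda=\begin{pmatrix}1 & \varpi_D^n[\lambda]\\ 0 & 1\end{pmatrix}$ does not move the coset representative: as you note, $g_\lambda\, g_{n,\mu}^0=g_{n,\mu}^0\begin{pmatrix}1 & [\lambda]\\ 0 & 1\end{pmatrix}$, so the same $\mu$ appears on both sides. Hence in $g_\lambda\widetilde{f}_n^0-\widetilde{f}_n^0$ the cross-vector pieces $d_k(\mu)\bigotimes_{j\neq k}x_j^{r_j}\otimes x_k^{r_k-1}y_k$ cancel exactly, and the resulting identity only says that $\mu\mapsto\sum_k d_k(\mu)\lambda^{p^k}$ lies in the ``admissible'' subspace (a polynomial in $\mu_{n-1}$ with exponents in $\{\sum_j p^j i_j:i_j\le r_j\}$ and \emph{arbitrary} coefficients depending on $[\mu]_{n-1}$). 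After your Moore inversion this says the same of each $d_k(\mu)$ individually --- but e.g.\ $d_k(\mu)=\mu_0$ satisfies this and is not constant. The generators you then invoke are $\begin{pmatrix}1&0\\ \varpi_D c&1\end{pmatrix}$ and $\begin{pmatrix}1+\varpi_D a&0\\ 0&1\end{pmatrix}$, but you have \emph{not} tested the upper unipotents $\begin{pmatrix}1&b\\ 0&1\end{pmatrix}$ for $v(b)<n$, and those are exactly what is needed. The paper applies $\begin{pmatrix}1&\varpi_D^{n-m}\\ 0&1\end{pmatrix}$ for $1\le m\le n$: this shifts the digit $\mu_{n-m}\mapsto\mu_{n-m}+1$, so the cross-vector part of the difference now carries $d_k([\mu]_{n-m},\mu_{n-m}-1)-d_k([\mu]_{n-m},\mu_{n-m})$, which must vanish because (by your own preliminary observation) the $X_n^0$-part of $\textnormal{Im}(T)$ is pure $\bigotimes_j x_j^{r_j}$. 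Induction on $m$ gives constancy.

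\textbf{Vanishing when $e=1$.} Your element $g=\begin{pmatrix}1&[b_0]\\ 0&1\end{pmatrix}$ adds $[b_0]$ to the \emph{bottom} digit $\mu_0$, not to $\mu_{n-1}$. When $e=1$ the Witt carry then propagates through all $n$ digits, so the overflow $B(\mu,b_0)\pmod{\varpi_D}$ is an iterated $P_0$-expression in $\mu_0,\dots,\mu_{n-1},b_0$, not $P_0(\mu_{n-1},b_0)$ as you claim; Lemma~\ref{witt} does not give that. The correct test element is $\begin{pmatrix}1&\varpi_D^{n-1}[b_0]\\ 0&1\end{pmatrix}$, which shifts only $\mu_{n-1}$ and (exactly when $e=1$) yields the single overflow $B\equiv P_0(\mu_{n-1},b_0)\pmod{\varpi_D}$. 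With that correction your Lucas/monomial-comparison endgame coincides with the paper's. (A minor point: in your preliminary observation, $T(h)$ supported on $\bigcup_{m\le n}X_m$ forces $h\in B_n$, not $B_{n-1}$; this does not affect the substance, since what matters is that $T(h)|_{X_n^0}$ then comes only from the ``going-up'' formula.)
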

\begin{proof}
We follow the methods given in Lemma 3.13 in \cite{MR3873949}.
We begin by taking the case where $e=1$ and $d_k\neq 0$. Applying $\beta^{-1}\widetilde{f}_n^1$, we can deduce the equality for $\widetilde{f}_n^1$. Consider the following equality when $b\in I_1$,
\begin{align*}
\begin{pmatrix} 
1& [b]\varpi_D^{n-1}  \\ 
0& 1   
 \end{pmatrix}\widetilde{f}_n^0-\widetilde{f}_n^0&=\sum\limits_{\mu\in In}[\begin{pmatrix} 
1& [b]\varpi_D^{n-1} \\ 
0& 1   
 \end{pmatrix}g_{n,\mu}^0,c(\mu)\bigotimes\limits_{j=0}^{wf-1}x_j^{r_j}]\\&+[g_{n,\mu}^0,\sum\limits_{k=0}^{wf-1}d_k\bigotimes\limits_{k\neq j=0}^{wf-1}x_j^{r_j}\otimes x_k^{r_k-1}y_k]-\widetilde{f}_n^0
 \end{align*}
 where
  \begin{equation*}
    \widetilde{f}_n^0=\sum\limits_{\mu\in In}[g_{n,\mu}^0, c(\mu)\bigotimes\limits_{j=0}^{wf-1}x_j^{r_j}+\sum\limits_{k=0}^{wf-1}d_k(\mu)\bigotimes\limits_{k\neq j=0}^{wf-1}x_j^{r_j}\otimes x_k^{r_k-1}y_k].
\end{equation*}
 Observe that
 \begin{equation*}
   \begin{pmatrix} 
1& [b]\varpi_D^{n-1} \\ 
0& 1   
 \end{pmatrix}\begin{pmatrix} 
\varpi_D^{n}& \mu \\ 
0& 1   
 \end{pmatrix}  =\begin{pmatrix} \varpi_D^{n}&[\mu]_{n-1}+[\mu_{n-1}+b]\varpi_D^{n-1}  \\ 0& 1   \end{pmatrix}\begin{pmatrix} 1& P_0(\mu_{n-1}, b)  \\ 0& 1   \end{pmatrix},
 \end{equation*}
 where $P_0(\mu_{n-1}, b)$ is the quotient given in Lemma \ref{witt}.
We continue by making the substitution $\mu_{n-1}\rightarrow \mu_{n-1}-b$. Thus we get
 \begin{align}
 \begin{pmatrix} 1& [b]\varpi_D^{n-1}  \\ 0& 1   \end{pmatrix}\widetilde{f}_n^0&-\widetilde{f}_n^0 =\sum\limits_{\mu\in In}[g_{n,\mu}^0,c([\mu]_{n-1},[\mu_{n-1}-b])\bigotimes\limits_{j=0}^{wf-1}x_j^{r_j}]\nonumber
 \\&+[g_{n,\mu}^0,\begin{pmatrix} 1& P_0(\mu_{n-1}-b, b)  \\ 0& 1   \end{pmatrix}\sum\limits_{k=0}^{wf-1}d_k\bigotimes\limits_{k\neq j=0}^{wf-1}x_j^{r_j}\otimes x_k^{r_k-1}y_k)]-\widetilde{f}_n^0\nonumber\\ 
 &=\sum\limits_{\mu\in In}[g_{n,\mu}^0,(c([\mu]_{n-1},[\mu_{n-1}-b])-c([\mu]_{n-1},[\mu_{n-1}]))\bigotimes\limits_{j=0}^{wf-1}x_j^{r_j}]\label{monomial}
 \\&+[g_{n,\mu}^0, \sum\limits_{k=0}^{wf-1}d_k(P_0(\mu_{n-1}-b, b)^{p^k})\bigotimes\limits_{j=0}^{wf-1}x_j^{r_j}].\nonumber
 \end{align}
For $0\leq k \leq wf-1$, Theorem \ref{lucus} implies that
\begin{align*}
    P_0(\mu_{n-1}-b, b)^{p^k}&=(\frac{(\mu_{n-1}-b)^{q^w}+b^{q^w}-\mu_{n-1}^{q^w}}{p})^{p^k}\\
    &=(\sum\limits_{s=1}^{p-1}\mu_{n-1}^{sp^{wf-1}}(-b)^{p^{fw}-sp^{wf-1}}\frac{\binom{p^{fw}}{sp^{fw-1}}}{p})^{p^k}.
\end{align*}
Observe that $P_0(b, \mu_{n-1}-b)^{p^k}$ has a nonzero monomial of the form $(-b)^{p^{k-1}}\mu_{n-1}^{(p-1)p^{k-1}}$ (if $k=0$, it has a monomial of the form $(-b)^{p^{wf-1}}\mu_{n-1}^{(p-1)p^{wf-1}}$).
We assume that $c_{[\mu]_n-1}(x)=\sum\limits_{j=0}^{q^w-1}a_j([\mu_{n-1}])z^j$  
where $a_j\in\overline{\mathbb{F}}_p[x_0,\dots,x_{n-1}]$ are suitable polynomial which exists by Lemma \ref{poly}. Let us set polynomials, 
$c([\mu]_{n-1}, [\mu_{n-1}])=c_{[\mu]_n-1}(\mu_{n-1})$ and $c([\mu]_{n-1}, [\mu_{n-1}-b])=c_{[\mu]_{n-1}}(\mu_{n-1}-b)$. We write,
\begin{align}
    \sum_{j=0}^{q^w-1}\widetilde{a}_j([\mu_{n-1}])\mu_{n-1}^j=&c_{[\mu]_{n-1}}(\mu_{n-1}-b)-c_{[\mu]_{n-1}}(\mu_{n-1})\nonumber\\
    =&\sum_{j=0}^{q^w-1}a_j([\mu_{n-1}])(\sum_{s=0}^{j-1}(-b)^{j-s}\binom{j}{s}\mu_{n-1}^s).\label{expand}
\end{align}
Since \eqref{monomial} lies in $\textnormal{Im}(T)$ and $r_k<p-1$, we have
\begin{equation*}
    \mu^{(p-1)p^k}(\sum_{j=0}^{q^w-1}\widetilde{a}_{(p-1)p^{k-1}}([\mu_{n-1}])+(-b)^{p^{k-1}})=0.
\end{equation*}
This give us, $\widetilde{a}_{(p-1)p^{k-1}}([\mu_{n-1}])+(-b)^{p^{k-1}}=0$ for all $b\in \mathbb{F}_{q^w}^\times$. Expanding \eqref{expand}, for each $b\in\mathbb{F}_{q^w}^\times$ we have an equation,
\begin{equation*}
    \widetilde{a}_(p-1)p^{k-1}([\mu_{n-1}])=\sum_{l=(p-1)p^{k-1}+1}^{q^w-1}a_l([\mu_{n-1}])(-b)^{l-(p-1)p^{k-1}}\binom{l}{(p-1)p^{k-1}}.
\end{equation*}
Now it follows from Theorem \ref{lucus} that
\begin{equation*}
   \text{L.H.S}=\sum_{l=(p-1)p^{k-1}+1}^{q^w-1}a_l([\mu_{n-1}])(-b)^{l-(p-1)p^{k-1}}\binom{l_{k-1}}{p-1}=-(-b)^{p^{k-1}}.
\end{equation*}
Multiplying each of these equations by $(-b)^{-p^{k-1}}$, and summing them together,
\begin{equation}\label{controdiction}
    \sum_{l=(p-1)p^{k-1}+1}^{q^w-1}a_l([\mu_{n-1}])(-b)^{l-p^k}\binom{l_{k-1}}{p-1}=\sum_{b\in\mathbb{F}_{q^w}^{\times}}-1.
\end{equation}

By Lemma \ref{modp}, an element in the left hand side of \eqref{controdiction} is not equal to zero when $l-p^k\equiv 0\hspace{4pt}\text{mod} \hspace{4pt}q^w-1$. Hence, if $l\equiv p^k\hspace{4pt}\text{mod}\hspace{4pt}q^w-1$, we must have $l_{k-1}=0$. Applying Theorem \ref{lucus}, this yields $\binom{l_{k-1}}{p-1}=0$. This leads to a contradiction, and we conclude $d_k=0$ for $0\leq k \leq wf-1$.\\
For $e \neq 1$, consider \begin{equation*}
    \widetilde{f}_n^0=\sum\limits_{\mu\in In}[g_{n,\mu}^0, c(\mu)\bigotimes\limits_{j=0}^{wf-1}x_j^{r_j}+\sum\limits_{k=0}^{wf-1}d_k(\mu)\bigotimes\limits_{k\neq j=0}^{wf-1}x_j^{r_j}\otimes x_k^{r_k-1}y_k].
\end{equation*}

We shall prove that $d_k$ and $d_k^{\prime}$ are constants using mathematical induction. Let us assume that $d_k$ is independent of $\mu_{n-i}$ for $i \leq m$. If $\mu\in I_n$, $\mu_n \notin \mu$ and thus the claim holds true when $m=0$. Assuming $d_k$ is independent of $\mu_{n-1}, \cdots, \mu_{n-m+1}$ and considering the following equality, 
\begin{align*}
\begin{pmatrix} 1& \varpi_D^{n-m}  \\ 0& 1   \end{pmatrix}\widetilde{f}_n^0-\widetilde{f}_n^0&= \sum\limits_{\mu\in In}[\begin{pmatrix} 1& \varpi_D^{n-m}  \\ 0& 1   \end{pmatrix}g_{n,\mu}^0,c(\mu)\bigotimes\limits_{j=0}^{wf-1}x_j^{r_j}]\\&+[\begin{pmatrix} 1& \varpi_D^{n-m}  \\ 0& 1   \end{pmatrix}g_{n,\mu}^0,\sum\limits_{k=0}^{wf-1}d_k(\mu)\bigotimes\limits_{k\neq j=0}^{wf-1}x_j^{r_j}\otimes x_k^{r_k-1}y_k]-\widetilde{f}_n^0.
\end{align*}
Observe that
\begin{equation*}
    \begin{pmatrix} 1& \varpi_D^{n-m}  \\ 0& 1   \end{pmatrix}\begin{pmatrix} \varpi_D^n & \mu  \\ 0& 1   \end{pmatrix}=\begin{pmatrix} \varpi_D^n & \mu^\prime  \\ 0& 1   \end{pmatrix} \begin{pmatrix} 1 & z  \\ 0& 1   \end{pmatrix},
\end{equation*}
where $\mu_t^\prime=\mu_t$ for $0\leq t <n-m$, $\mu_{n-m}+1=\mu^\prime_{n-m}$ and $z \in\mathcal{O}_D$. Also, we have $\mu_t^\prime\neq \mu_t$ for $t>n-m$ but since $d_k$ is independent of $\mu_{n-1}, \cdots, \mu_{n-m+1}$ the transformation $\mu^{\prime}\mapsto \mu $ does not effect $d_k(\mu)$. Thus, we have  
\begin{align*}
   \begin{pmatrix} 1& \varpi_D^{n-m}  \\ 0& 1   \end{pmatrix}&\widetilde{f}_n^0-\widetilde{f}_n^0=\sum\limits_{\mu\in I_n}[g_{n,\mu}, c^{\prime}(\mu)\bigotimes\limits_{j=0}^{wf-1} x_j^{r_j}]\\+[g_{n, \mu}^0,\sum\limits_{k=0}^{wf-1}&d_k([\mu]_{n-m},\mu_{n-m}-1) \bigotimes\limits_{k\neq j= 0}^{wf-1}x_j^{r_j}\otimes z^{p^k}x_k^{r_k}+x_k^{r_k-1}y_k)]-\widetilde{f}_n^0,\\
   \end{align*}
   where $c^{\prime}(\mu)$ is a polynomial in $\overline{\mathbb{F}}_p[\mu_0, \cdots, \mu_{n-1}]$.
 This gives us,
  \begin{align}
   \begin{pmatrix} 1& \varpi_D^{n-m}  \\ 0& 1   \end{pmatrix}\widetilde{f}_n^0-\widetilde{f}_n^0&=\sum\limits_{\mu\in I_n}[g_{n,\mu}^0,\widetilde{c}(\mu)\bigotimes\limits_{j=0}^{wf-1} x_j^{r_j}]\nonumber\\
   &+[g_{n,\mu}^0, \sum\limits_{k=0}^{wf-1}(\widetilde{d}_k({\mu}) )\bigotimes\limits_{k\neq j= 0}^{wf-1}x_j^{r_j}\otimes x_k^{r_k-1}y_k)] \label{d_k} \\
   &\in\textnormal{Im}(T)\nonumber,
 \end{align}
 where $\widetilde{d}_k({\mu})=d_k([\mu]_{n-m},\mu_{n-m}-1)-d_k([\mu]_{n-m},\mu_{n-m})$ and $\widetilde{c}(\mu)$ is a polynomial in $\overline{\mathbb{F}}_p[\mu_0, \cdots, \mu_{n-1}]$.

Therefore, we have
\begin{equation*} 
d_k([\mu]_{n-m},\mu_{n-m}-1)-d_k([\mu]_{n-m},\mu_{n-m})=0
\end{equation*}
for all $k$ since $\eqref{d_k}\in \textnormal{Im} (T) $ and thus $d_k$ is independent of $\mu_{n-m}$ and by induction, it is a constant. 
\end{proof}

\begin{lem}\label{lem 13}
Let $n\in\mathbb{Z}_{\geq 0}$ and $\widetilde{f}$ be an element of $\textnormal{ind}_{KZZ^{\prime}}^G\sigma$ such that $g\widetilde{f}-\widetilde{f}\in T(\textnormal{ind}_{KG}^G\sigma)+B_{n-1} \forall g\in I(1)$. For $f^{\prime}\in B_{n-1}$, we set $\widetilde{f}=\widetilde{f}_n^0+\widetilde{f}_n^1+f^{\prime}$ where support of $\widetilde{f}_n^0$ is $S_n^0$ and support of $\widetilde{f}_n^1$ is $S_n^1$. Then $c(\mu)=\sum_{k=0}^{wf-1}\widetilde{c}_k\mu_{n-1}^{p^k(r_k+1)}$ and $c^{\prime}(\mu)=\sum_{k=0}^{wf-1}\widetilde{c}_k^{\prime}\mu_{n-1}^{p^k(r_k+1)}$. Furthermore, $\widetilde{c}_k$ and $\widetilde{c}_k^{\prime}$ are independent of $\mu$, where $c(\mu)$ and $c^{\prime}(\mu)$ are as define in lemma \ref{lem7}.
\end{lem}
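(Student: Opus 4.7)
The plan is to combine Lemmas~\ref{lem7} and~\ref{lem9} with a refined use of the $I(1)$-invariance of $\widetilde{f}$ modulo $\textnormal{Im}(T)+B_{n-1}$ in order to pin down the structure of $c(\mu)$; the statement for $c^\prime(\mu)$ then follows by applying $\beta^{-1}$, which normalizes $I(1)$. By Lemma~\ref{lem7} one may write
\[
c(\mu)=\sum_{j=0}^{q^w-1}a_j([\mu]_{n-1})\,\mu_{n-1}^j
\]
with $a_j\in\overline{\mathbb{F}}_p[z_0,\dots,z_{n-2}]$ of degree at most $q^w-1$ in each variable. The goal is then to show $(a)$ that $a_j\equiv 0$ unless $j\in\{p^k(r_k+1):0\le k\le wf-1\}$, and $(b)$ that each surviving $a_{p^k(r_k+1)}$ is a constant $\widetilde{c}_k\in\overline{\mathbb{F}}_p$.

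For step~$(a)$, I would imitate the $e=1$ calculation from the proof of Lemma~\ref{lem9}: act on $\widetilde{f}_n^0$ by $\bigl(\begin{smallmatrix}1 & [b]\varpi_D^{n-1}\\ 0 & 1\end{smallmatrix}\bigr)$ for $b\in\mathbb{F}_{q^w}^\times$, apply the matrix identity of that proof (together with Lemma~\ref{witt}), and substitute $\mu_{n-1}\mapsto\mu_{n-1}-b$. Because Lemma~\ref{lem9} already forces the $d_k$ to be constants (zero when $e=1$), the contribution of the $d_k$-part collapses to a constant-in-$\mu_{n-1}$ multiple of $\bigotimes_j x_j^{r_j}$, hence to a multiple of $s_n^0\in\textnormal{Im}(T)$ by Proposition~\ref{invariant}(1); modulo $\textnormal{Im}(T)+B_{n-1}$ the difference therefore reduces to
\[
\sum_{\mu\in I_n}\Bigl[g_{n,\mu}^0,\bigl(c_{[\mu]_{n-1}}(\mu_{n-1}-b)-c_{[\mu]_{n-1}}(\mu_{n-1})\bigr)\bigotimes_{j=0}^{wf-1}x_j^{r_j}\Bigr].
\]
Expanding binomially, the coefficient of $\mu_{n-1}^s$ in the inner difference is $\sum_{j>s}a_j([\mu]_{n-1})(-b)^{j-s}\binom{j}{s}$; by Proposition~\ref{invariant}(1)--(3) only the exponents $s=p^l(r_l+1)$ survive modulo $\textnormal{Im}(T)+B_{n-1}$ up to lower-degree terms, so the requirement that the whole expression land in $\textnormal{Im}(T)+B_{n-1}$ forces this inner coefficient to vanish for every $s\notin\{p^l(r_l+1)\}$.

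Running the Lucas--Lemma~\ref{modp} contradiction argument at the end of the proof of Lemma~\ref{lem9}---now with $(-b)^{j-s}\binom{j}{s}$ playing the role of $(-b)^{p^{k-1}}$ and with $b$ varying over $\mathbb{F}_{q^w}^\times$---then forces $a_j([\mu]_{n-1})\equiv 0$ for every $j\notin\{p^k(r_k+1)\}$, completing $(a)$. For $(b)$, I would replay the same calculation with $\bigl(\begin{smallmatrix}1 & \varpi_D^{n-m}\\ 0 & 1\end{smallmatrix}\bigr)$ for $m=2,\dots,n$, exactly as in the inductive step of Lemma~\ref{lem9}; matching the coefficient of $\mu_{n-1}^{p^k(r_k+1)}$ yields
\[
a_{p^k(r_k+1)}([\mu]_{n-m},\mu_{n-m}-1)\equiv a_{p^k(r_k+1)}([\mu]_{n-m},\mu_{n-m})\pmod{\textnormal{Im}(T)+B_{n-1}},
\]
and induction on $m$ forces independence from each $\mu_{n-m}$ in turn. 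The main obstacle is step~$(a)$: cleanly extracting from the condition ``lies in $\textnormal{Im}(T)+B_{n-1}$'' the sharp constraint $j=p^k(r_k+1)$ through the Lucas combinatorics of the shifted binomial sums, which is exactly where Hendel's original treatment in~\cite{MR3873949} is most delicate.
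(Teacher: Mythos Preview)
Your inductive step~(b) matches the paper's argument exactly. Step~(a), however, has two genuine gaps.

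First, your reading of Proposition~\ref{invariant} is inverted. That proposition shows $s_n^k\in\textnormal{Im}(T)$ when every digit satisfies $0\le k_j\le r_j$ (with $k\ne r$); it does \emph{not} assert that $s_n^s\in\textnormal{Im}(T)$ for all $s\notin\{p^l(r_l+1)\}$. Hence the correct constraint extracted from ``$\Delta_b c$ lands in $\textnormal{Im}(T)+B_{n-1}$'' is that the coefficient of $\mu_{n-1}^s$ in $\Delta_b c$ vanishes whenever \emph{some} digit $s_j>r_j$, and from this weaker constraint one must argue back to the shape of $c$ itself. The paper does this not by summing over $b\in\mathbb{F}_{q^w}^\times$ but with the single shift $b=1$ and a maximality argument: writing $\Delta c=c(\mu_{n-1}+1)-c(\mu_{n-1})$, one takes a monomial $\mu_{n-1}^k$ of $c$ that is maximal for the digit-wise partial order among those with $k_{j_0}>r_{j_0}$ and $k\ne p^{j_0}(r_{j_0}+1)$, and observes via Lucas that $\Delta c$ contains $\mu_{n-1}^{k-p^{j_1}}$ for some $j_1\ne j_0$ with $k_{j_1}>0$; this exponent still has $j_0$-th digit exceeding $r_{j_0}$, contradicting the constraint. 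Your summation scheme may be salvageable, but as written it rests on a misstatement of which $s_n^s$ lie in $\textnormal{Im}(T)$.

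Second, you omit the exponent $k=r$, which neither your argument nor the maximality argument can exclude (all its digits satisfy $k_j\le r_j$). The paper writes
\[
\widetilde f_n^0=\sum_{\mu\in I_n}\Bigl[g_{n,\mu}^0,\Bigl(Q([\mu]_{n-1})\,\mu_{n-1}^r+\sum_{k=0}^{wf-1}\widetilde c_k([\mu]_{n-1})\,\mu_{n-1}^{p^k(r_k+1)}\Bigr)\bigotimes_{j=0}^{wf-1}x_j^{r_j}\Bigr]
\]
and then uses the relation $(-1)^r s_1^r+[\alpha,\bigotimes_j y_j^{r_j}]\in\textnormal{Im}(T)$ coming from Proposition~\ref{T} to trade each $g_{n-1,\mu}^0\cdot s_1^r$ for an element supported on $B_{n-1}$ (modulo $\textnormal{Im}(T)$), thereby absorbing the entire $Q$-term into $f'$. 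Without this step the conclusion $c(\mu)=\sum_k\widetilde c_k\,\mu_{n-1}^{p^k(r_k+1)}$ does not follow.
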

\begin{proof}
Assume $wf > 1$. By Lemma \ref{lem9} we have
\begin{align*}
    \widetilde{f}_n^0&=\sum\limits_{\mu \in I_n}[g_{n,\mu}^0, {c}(\mu)\bigotimes\limits_{j=0}^{wf-1}x_j^{r_j}]+\sum\limits_{\mu \in I_n}[g_{n,\mu}^0, \bigotimes\limits_{s\neq j=0}^{wf-1}x_j^{r_j}\otimes x_s^{r_s-1}y_k]\\
    &=\sum\limits_{\mu \in I_n}[g_{n,\mu}^0, c(\mu)\bigotimes\limits_{j=0}^{wf-1}x_j^{r_j}]+ t_n^s.
\end{align*}

By Proposition \ref{invariant}, the element $y_n^s$ is an $I(1)$-invariant. Therefore, we can assume without loss of generality that $\widetilde{f}_n^0=\sum\limits_{\mu \in I_n}[g_{n,\mu}^0, {c}(\mu)\bigotimes\limits_{j=0}^{wf-1}x_j^{r_j}]$ holds for all $n$. For the case $n=0$, this claim is trivial, since $\mu \in I_0$.
Now for $n \geq 1$, consider the following computation,
\begin{align*}
    \begin{pmatrix} 
1& -\varpi_D^{n-1}  \\ 
0& 1   
 \end{pmatrix}
 \widetilde{f}_n^0&-\widetilde{f}_n^0 = \begin{pmatrix} 
1& -\varpi_D^{n-1}  \\ 
0& 1   
\end{pmatrix}
\sum\limits_{\mu \in I_n}[g_{n,\mu}^0,  c(\mu)\bigotimes\limits_{j=0}^{wf-1}x_j^{r_j}]-\widetilde{f}_n^0. \\
&=\sum\limits_{\mu \in I_n}[g_{n, \mu}^0, (c([\mu]_{n-1}, [\mu_{n-1}+1])-c([\mu]_{n-1}, [\mu_{n-1}]))\bigotimes\limits_{j=0}^{wf-1}x_j^{r_j}].
\end{align*}
Let us denote 
\begin{equation*}
    \bigtriangleup c= c([\mu]_{n-1}, [\mu_{n-1}+1])-c([\mu]_{n-1}, [\mu_{n-1}]),
\end{equation*}
and it is a polynomial in $\mu_{n-1}$ with coefficients in $\overline{\mathbb{F}}_p[\mu_0,\cdots,\mu_{n-2}]$. 

We claim that the degree $k$ of $\mu_{n-1}$ must be between $0$ and $r_j$ for all $0 \leq j \leq wf-1$, or equal to $p^l(r_l+1)$ for some $0 \leq l \leq wf-1$. If this is not true, then there is a $k_{j_0} > r_{j_0}$ for some $j_0$ and $k \neq p^{j_0}(r_{j_0}+1)$. Without loss of generality, we can assume there is no other monomial $\mu_{n-1}^{k^\prime}$ in $c(\mu)$ such that $k_j \leq k_{j^\prime}$ for all $j$.
By Theorem \ref{lucus} we have
\begin{equation*}
    (\mu_{n-1}+1)^k-\mu_{n-1}^k=\sum\limits_{i=0}^{k-1}\prod\limits_{j=0}^{wf-1}\left(
    \begin{array}{c}
      k_j \\
      i_j
    \end{array}
  \right)\mu_{n-1}^i, 
\end{equation*}
and the above implies that $\bigtriangleup c$ contains all the monomials of the form $\mu_{n-1}^{k-p^l}$ where $0\leq l \leq wf-1$. In particular $\mu_{n-1}^{k-p^{j_1}}$ appears in $\bigtriangleup c$ for $j_1$ such that $j_1\neq j_0$ and $k_{j_1}>0$ and since $k_{j_0}>r_{j_0}$ and $k\neq p^{j_0}(r_{j_0}+1)$. This contradicts our assumption. Therefore we write,
\begin{equation*}
      \widetilde{f}_n^0=\sum\limits_{\mu \in I_n}[g_{n,\mu}^0,  (Q([\mu]_{n-1})\mu_{n-1}^r+\sum\limits_{k=0}^{wf-1}\widetilde{c}_k([\mu]_{n-1})\mu_{n-1}^{p^k(r_k+1)})\bigotimes\limits_{j=0}^{wf-1}x_j^{r_j}], 
\end{equation*}
where $\widetilde{c}_k$ and $Q$ depends only on $[\mu]_{n-1}$.\\
Let us now assume $n\geq 2$. Consider the following computation

\begin{align}
    \sum\limits_{\mu \in I_n}[g_{n,\mu}^0, Q([\mu]_{n-1})&\mu_{n-1}^r\bigotimes\limits_{j=0}^{wf-1}x_j^{r_j}] =\sum\limits_{\mu\in I_n}[\begin{pmatrix} 
\varpi_D^{n}& \mu  \\ 
0& 1   
 \end{pmatrix}, Q([\mu]_{n-1})\mu_{n-1}^r\bigotimes\limits_{j=0}^{wf-1}x_j^{r_j}]\label{f_n^0}\\
    &=
    \sum\limits_{\mu\in I_{n-1}}\begin{pmatrix} 
\varpi_D^{n-1}& \mu  \\ 
0& 1   
 \end{pmatrix}Q(\mu)\sum_{\mu^\prime\in I_1}[\begin{pmatrix} 
\varpi_D& \mu^\prime  \\ 
0& 1   
 \end{pmatrix}, 
\mu_{n-1}^r\bigotimes\limits_{j=0}^{wf-1}x_j^{r_j}]\nonumber\\
 &= \sum\limits_{\mu\in I_{n-1}}g_{{n-1},\mu}^0Q(\mu)s_1^r.\label{Q}
\end{align}
By Proposition \ref{T} we have
\begin{equation*}
    (-1)^rs_1^r+[\alpha, \bigotimes\limits_{j=0}^{wf-1}y_j^{r_j}]\in \textnormal{Im}(T)
\end{equation*}
and after taking modulo $\textnormal{Im}(T)$ we obtain
\begin{align*}
(-1)^rs_1^r+[\alpha, \bigotimes\limits_{j=0}^{wf-1}y_j^{r_j}] & =0\\
(-1)^{r+1}s_1^r &=[\alpha, \bigotimes\limits_{j=0}^{wf-1}y_j^{r_j}].
\end{align*}
Thus the equation \eqref{Q} yields
\begin{align}
    \sum\limits_{\mu\in I_{n-1}}g_{{n-1},\mu}^0Q(\mu)s_1^r &= \sum\limits_{\mu\in I_{n-1}}g_{{n-1},\mu}^0[\alpha, (-1)^{r+1}Q(\mu)\bigotimes\limits_{j=0}^{wf-1}y_j^{r_j}]\nonumber\\
    &= \sum\limits_{\mu\in I_{n-1}}[g_{{n-2}, [\mu]_{n-2}}^0,  (-1)^{r+1}Q(\mu)\begin{pmatrix} 
1& [\mu_{n-2}]  \\ 
0& 1   
 \end{pmatrix}\bigotimes\limits_{j=0}^{wf-1}y_j^{r_j}]\label{f_n^1}.
\end{align}
Thus we can replace every term in left hand side of \eqref{f_n^0} by a term in \eqref{f_n^1} without changing $\widetilde{f}_n^1$. 
If $n=1$, we can just replace $s_1^r$ by $(-1)^{r+1}[\alpha ,\bigotimes\limits_{j=0}^{wf-1}y_j^{r_j}]$. 
Therefore we must have $Q(\mu)=0$.

For the rest of the proof we shall use the mathematical induction. 
We inductively assume that $\widetilde{c}_k$ are independent of $[\mu]_{n-m}$  for   $1\leq m \leq n-1$. We note that $\mu \in I_0$ when $n=1$ and thus claim is trivially true. Therefore let us  assume $n\geq 2$ and 
$\widetilde{c}_k$ are independent of $\mu_{n-i}$  for all $i<m$ and observe,
\begin{equation}\label{c_k}
\begin{pmatrix} 
1& \varpi_D^{n-m}  \\ 
0& 1   
 \end{pmatrix}
 \widetilde{f}_n^0-\widetilde{f}_n^0 = \begin{pmatrix} 
1& \varpi_D^{n-m}  \\ 
0& 1   
\end{pmatrix}
\sum\limits_{\mu \in I_n}[g_{n,\mu}^0,  \widetilde{c}([\mu]_{n-1})\mu_{n-1}^{p^k(r_k+1)}\bigotimes\limits_{j=0}^{wf-1}x_j^{r_j}]-\widetilde{f}_n^0. 
\end{equation}
Note that
\begin{equation*}
    \begin{pmatrix} 1& \varpi_D^{n-m}  \\ 0& 1   \end{pmatrix}\begin{pmatrix} \varpi_D^n & \mu  \\ 0& 1   \end{pmatrix}=\begin{pmatrix} \varpi_D^n & \mu^\prime  \\ 0& 1   \end{pmatrix} \begin{pmatrix} 1 & \widetilde{z}  \\ 0& 1   \end{pmatrix}
\end{equation*}
where $\mu_t^\prime=\mu_t$ for $0\leq t <n-m$, $\mu_{n-m}+1=\mu^\prime_{n-m}$ and $z \in\mathcal{O}_D$. Also, we have $\mu_t^\prime = \mu_t+ z_t$ for $t>n-m$. Here $z_t\in \overline{\mathbb{F}}_p[\mu_{n-m}, \cdots, \mu_{n-2}]$. Thus, the right hand side of the equation \eqref{c_k} equals to, 
\begin{align*}
&\sum\limits_{\mu \in I_n}[g_{n,\mu}^0, (\widetilde{c}([\mu]_{n-m}, \mu_{n-m}-1)-\widetilde{c}([\mu]_{n-m}, \mu_{n-m}))  \mu_{n-1}^{p^k(r_k+1)}\bigotimes\limits_{j=0}^{wf-1}x_j^{r_j}]\nonumber\\
&+ \sum\limits_{\mu \in I_n}[g_{n,\mu}^0, \sum\limits_{k=0}^{wf-1}\sum\limits_{j=0}^{r_k}\widetilde{c}([\mu]_{n-m}, \mu_{n-m}-1)(\mu_{n-1}+z_{n-1})^{p^k(r_k+1)}\bigotimes\limits_{j=0}^{wf-1}x_j^{r_j}]\nonumber\\
\end{align*}
\begin{align}
&=
\sum\limits_{\mu \in I_n}[g_{n,\mu}^0, (\widetilde{c}([\mu]_{n-m}, \mu_{n-m}-1)-\widetilde{c}([\mu]_{n-m}, \mu_{n-m}))\mu_{n-1}^{p^k(r_k+1)}\bigotimes\limits_{j=0}^{wf-1}x_j^{r_j}]\label{degree}\\
&+ \sum\limits_{\mu \in I_n}[g_{n,\mu}^0, \sum\limits_{k=0}^{wf-1}\sum\limits_{j=0}^{r_k}\widetilde{c}([\mu]_{n-m}, \mu_{n-m}-1)\mu_{n-1}^{jp^k}z_{n-1}^{p^k(r_k+1)-jp^k} \binom{r_k+1}{j}
\bigotimes\limits_{j=0}^{wf-1}x_j^{r_j}].\nonumber
\end{align}

We note that the degree of $\mu_{n-1}$ is greater than $p^kr_k$ in \eqref{degree} and thus by Proposition \ref{T} we have
\begin{equation*}
    \widetilde{c}([\mu]_{n-m}, \mu_{n-m}-1)-\widetilde{c}([\mu]_{n-m},\mu_{n-m})=0.
\end{equation*}
Therefore we can conclude that $\widetilde{c}([\mu]_{n-m}, \mu_{n-m}-1)=\widetilde{c}([\mu]_{n-m},\mu_{n-m})$. This implies $\widetilde{c}_k$ independent of $[\mu]_{n-m}$ as required.\\
The same is applied to $\widetilde{f}_n^1$ by applying $\beta^{-1}\widetilde{f}_n^1$ to get the result .
\end{proof}
\subsection{Proof of the main theorem.}
We now prove our main theorem of this paper, which provides a full description of a standard basis of $I(1)$-invariants of $(\textnormal{ind}_{KZZ^{\prime}}^G/(T))$ as an $\overline{\mathbb{F}}_p$-vector space.
\begin{thm}\label{main}
Let $D$ be a finite dimensional central $F$-division algebra with $dim_F(D)=w^2$ and let $e$ and $f$ respectively be the ramification degree and inertia degree of $F$ over $\mathbb{Q}_p$. We assume 
$2<r_j< p-3$ for $0\leq j \leq wf-1$ and let us set,
\begin{align*}
    &S_m^l=\{s_n^{p^l(r_l+1)}\}_{n\geq m}\cup \{\beta s_n^{p^l(r_l+1)}\}_{n\geq m},\\
    &T_m^l=\{t_n^l\}_{n\geq m}\cup \{\beta t_n^l\}_{n\geq m},\\
   &S_m=\cup_{l=0}^{wf-1}S_m^l,\\
    &T_m=\cup_{l=0}^{wf-1}T_m^l.\\
\end{align*}
Then, an $I$-eigenbasis for the space $(\textnormal{ind}_{KZZ^{\prime}}^G\sigma/(T))^{I(1)}$ of $I(1)$-invariants as an $\overline{\mathbb{F}}_p$-vector space is given by
\begin{align*}
  S_1\cup \{[\textnormal{Id}, \bigotimes\limits_{j=0}^{wf-1} x_j^{r_j}], [\alpha,\bigotimes\limits_{j=0}^{wf-1} y_j^{r_j} ]\} &\hspace{10pt} \text{when} \hspace{5pt}e=1,wf>1,\\
   S_1\cup \{[\textnormal{Id},\bigotimes\limits_{j=0}^{wf-1} x_j^{r_j}], [\alpha,\bigotimes\limits_{j=0}^{wf-1} y_j^{r_j} ]\}\cup T_1 &\hspace{10pt} \text{when}\hspace{5pt} e>1,wf>1.\\
\end{align*}
\end{thm}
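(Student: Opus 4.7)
The plan is to combine the structure lemmas of Section 3 with Propositions \ref{prop9} and \ref{invariant} in a strong induction on the highest level of support appearing in the Cartan decomposition $G = \bigsqcup_n X_n$. I would first verify that every element listed in the proposed basis is an $I(1)$-invariant of the quotient and an $I$-eigenvector: membership in the invariants is immediate from Proposition \ref{invariant}(3) for $s_n^{p^l(r_l+1)}$ (the hypothesis $2<r_j<p-3$ ensures $0<1\leq p-r_l-1$), from Proposition \ref{invariant}(4) for $t_n^l$ when $e>1$ (since $r>p^l$ under our hypothesis), and from Proposition \ref{prop9} for $s_0^0=[\mathrm{Id},\bigotimes x_j^{r_j}]$ and $\beta s_0^0=[\alpha,\bigotimes y_j^{r_j}]$. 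The $I$-eigenvector property and distinctness of the $I$-characters are then read off Lemma \ref{I}, which gives linear independence among elements sharing the same support level, and the filtration by $B_n$ handles independence across levels.

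The harder direction is spanning. Let $\widetilde{f}\in\textnormal{ind}_{KZZ^{\prime}}^G\sigma$ represent a class in $(\textnormal{ind}_{KZZ^{\prime}}^G\sigma/(T))^{I(1)}$, and let $n$ be the largest integer with $\widetilde{f}$ having nonzero component in $X_n$. I write $\widetilde{f}=\widetilde{f}_n^0+\widetilde{f}_n^1+f'$ with $f'\in B_{n-1}$ and the two summands supported on $X_n^0,X_n^1$. Applying Lemma \ref{lem7} to $\widetilde{f}$ narrows the coefficient vectors to those involving only $\bigotimes x_j^{r_j}$ (resp. $\bigotimes y_j^{r_j}$) and the near-neighbor monomials $\bigotimes_{j\neq k}x_j^{r_j}\otimes x_k^{r_k-1}y_k$ (resp. with $y,x$ swapped). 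Lemma \ref{lem9} then collapses the $d_k,d_k'$ to scalars, forcing them to vanish when $e=1$. Lemma \ref{lem 13} further reduces the scalar polynomial $c(\mu)$ to the shape $\sum_k \widetilde{c}_k\mu_{n-1}^{p^k(r_k+1)}$ with $\widetilde{c}_k$ independent of $\mu$, and similarly for $c'(\mu)$. Thus $\widetilde{f}_n^0=\sum_k\widetilde{c}_k s_n^{p^k(r_k+1)}+\sum_k d_k t_n^k$ and, using that $\beta$ normalizes $I(1)$, $\widetilde{f}_n^1=\sum_k\widetilde{c}_k'\beta s_n^{p^k(r_k+1)}+\sum_k d_k' \beta t_n^k$ (with the $t$-terms discarded when $e=1$). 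Subtracting these from $\widetilde{f}$ leaves a representative of the same class supported in $B_{n-1}$ that still satisfies the hypothesis $g\widetilde{f}-\widetilde{f}\in T(\textnormal{ind}_{KZZ^{\prime}}^G\sigma)+B_{n-2}$ needed to re-enter the induction.

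The base case $n=0$ is delicate and is where I expect the main obstacle: the only building blocks available are $[\mathrm{Id},v]$ and $[\alpha,v']$, and an $I(1)$-invariance argument using the three types of matrices listed before Lemma \ref{lem1} forces $v\in\overline{\mathbb{F}}_p\!\cdot\!\bigotimes x_j^{r_j}$ and $v'\in\overline{\mathbb{F}}_p\!\cdot\!\bigotimes y_j^{r_j}$. What requires care here is that the relation $s_1^r+\beta[\mathrm{Id},\bigotimes x_j^{r_j}]\in\mathrm{Im}(T)$ from Proposition \ref{invariant}(2) identifies $s_1^r$ with a combination involving $[\alpha,\bigotimes y_j^{r_j}]$ modulo $(T)$, so the element $[\alpha,\bigotimes y_j^{r_j}]$ cannot be absorbed into the $s_n^{p^l(r_l+1)}$ family, justifying its inclusion as a separate basis vector; dually for $[\mathrm{Id},\bigotimes x_j^{r_j}]$.

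The final step is verifying that after induction one recovers each listed element with its correct multiplicity and no hidden linear relations between $s_n^{p^l(r_l+1)}$, $\beta s_n^{p^l(r_l+1)}$, $t_n^l$, $\beta t_n^l$ for different $(n,l)$ modulo $\mathrm{Im}(T)$. This follows by comparing $I$-eigencharacters via Lemma \ref{I}: the characters $a\mapsto a^{p^l(r_l+1)+1}d^{-p^l(r_l+1)}$ are distinct as $l$ varies over $\{0,\dots,wf-1\}$ under the hypothesis $r_j<p-3$, and they differ from the characters attached to the $t_n^l$. Combined with the distinction between the $X_n^0$ and $X_n^1$ supports (which is preserved modulo $\mathrm{Im}(T)$ by Proposition \ref{T} because $T$ sends $X_n^i$-supported elements into $X_{n\pm1}^i$-supported ones), this completes the basis description in both cases $e=1$ and $e>1$.
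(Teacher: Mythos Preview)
Your proposal is correct and follows essentially the same route as the paper: downward induction on the top level of Cartan support, invoking Lemmas \ref{lem7}, \ref{lem9}, \ref{lem 13} in turn to force $\widetilde{f}_n^0$ and $\widetilde{f}_n^1$ into combinations of the $s_n^{p^k(r_k+1)}$ and $t_n^k$ (and their $\beta$-twists), then handling the base $n=0$ by a direct unipotent-matrix computation showing $v_0\in\overline{\mathbb{F}}_p\!\cdot\!\bigotimes x_j^{r_j}$ and $v_1\in\overline{\mathbb{F}}_p\!\cdot\!\bigotimes y_j^{r_j}$. Your treatment of linear independence via the $I$-eigencharacters of Lemma \ref{I} together with the $X_n^0/X_n^1$ and level filtrations is more explicit than the paper's own proof, which leaves that point implicit.
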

\begin{proof}
Lemmas \ref{lem7}, \ref{lem9} and \ref{lem 13} imply that $\Tilde{f}^0_n$ and $\Tilde{f}^1_n$ contain linear combinations of elements $s_n^{p^k(r_k+1)}$ and $t_n^s$ (respectively $\beta s_n^{p^k(r_k+1)}$ and $\beta t_n^s$). Proposition \ref{invariant} shows that these elements are $I(1)$-invariant in the quotient. Moreover, the image of $\Tilde{f}-\Tilde{f}^0_n-\Tilde{f}_n^1$ lies in the quotient and is supported on $B_{n-1}$, implying that it is also $I(1)$-invariant. For $n=0$, we get $\Tilde{f}_0=\Tilde{f}-f^\prime=[\textnormal{Id}, v_0]+[\alpha, v_1]$ where $f^\prime$ is a $I(1)$-invariant in the quotient and supported on $B_n \backslash B_0$, implying that $\Tilde{f}_0\in (\textnormal{ind}_{KZZ^{\prime}}^G\sigma/(T))^{I(1)}$. By Proposition \ref{T}, we show that 
\begin{equation*}
\begin{pmatrix} 
1& 1 \\ 
0& 1   
 \end{pmatrix}
 \widetilde{f}_0-\widetilde{f}_0   = 
[\textnormal{Id},\begin{pmatrix} 
1& 1  \\ 
0& 1   
\end{pmatrix} v_0]-[\textnormal{Id}, v_0] \in \textnormal{Im}(T)
\end{equation*}
if
\begin{equation*}
   \begin{pmatrix} 
1& 1 \\ 
0& 1   
 \end{pmatrix}
 v_0-v_0= \sum\limits_{k=0}^{wf-1}\widetilde{c}_k\bigotimes\limits_{k\neq j=0}^{wf-1}x_j^{r_j}\otimes x_k^{r_k-1}(x_k+y_k)-\bigotimes\limits_{k\neq j=0}^{wf-1}x_j^{r_j}\otimes x_k^{r_k-1}y_k=0.
\end{equation*}
The above leads to $\widetilde{c}_k=0$ and $v_0=\bigotimes\limits_{ j=0}^{wf-1}x_j^{r_j}$. Similarly, we consider the following argument for $[\alpha,v_1]$,
\begin{equation*}
\begin{pmatrix} 
1& 0 \\ 
\varpi_D& 1   
 \end{pmatrix}
 \widetilde{f}_0-\widetilde{f}_0   = 
[\alpha,\begin{pmatrix} 
1& 0  \\ 
1& 1   
\end{pmatrix} v_1]-[\alpha, v_1] \in \textnormal{Im}(T).
\end{equation*}
This implies that
\begin{equation*}
   \begin{pmatrix} 
1& 0 \\ 
1& 1   
 \end{pmatrix}
 v_1-v_1= \sum\limits_{k=0}^{wf-1}\widetilde{c}_k^{\prime}\bigotimes\limits_{k\neq j=0}^{wf-1}y_j^{r_j}\otimes y_k^{r_k-1}(x_k+y_k)-\bigotimes\limits_{k\neq j=0}^{wf-1}y_j^{r_j}\otimes y_k^{r_k-1}x_k=0.
\end{equation*}
Thus, we conclude $\widetilde{c}_k^{\prime}=0$ and  $v_1=\bigotimes_{j=1}^{wf-1}y_j^{r_j}$. This completes the proof. 
\end{proof}
\bibliographystyle{amsplain}
\bibliography{reference}
\end{document}